\newtheorem{theorem}{Theorem}[section]
\newtheorem{lemma}[theorem]{Lemma}
\numberwithin{equation}{subsection}
\newtheorem{definition}[theorem]{Definition}
\title{Asymptotics for moments of certain cotangent sums for arbitrary exponents}
\author{Helmut Maier and Michael Th. Rassias}
\date{\today}
\address{Department of Mathematics, University of Ulm, Helmholtzstrasse 18, 89081 Ulm, Germany.}
\email{helmut.maier@uni-ulm.de}
\address{Institute of Mathematics, University of Zurich, CH-8057, Zurich, Switzerland
 \& Institute for Advanced Study, Program in Interdisciplinary Studies,
1 Einstein Dr, Princeton, NJ 08540, USA.}
\email{michail.rassias@math.uzh.ch, michailrassias@math.princeton.edu}
\thanks{}
\begin{document}

 \maketitle
 
\begin{abstract} 
In this paper we extend a result on the asymptotics of moments of certain 
cotangent sums associated to the Estermann and Riemann zeta functions
established in a previous paper for integer exponents to arbitrary positive real exponents.

\end{abstract}
\section{Introduction}
The authors in joint work \cite{mr} and the second author in his thesis \cite{rasthesis}, investigated 
the distribution of cotangent sums
$$c_0\left(\frac{r}{b}\right)=-\sum_{m=1}^{b-1}\frac{m}{b}\cot\left(\frac{\pi m r}{b}\right)$$
as $r$ ranges over the set
$$\{r\::\: (r, b)=1,\ A_0b\leq r\leq A_1 b\}\:,$$
where $A_0$, $A_1$ are fixed with $1/2<A_0<A_1<1$ and $b$ tends to infinity.\\
They could show that 
$$H_k=\int_0^1\left(\frac{g(x)}{\pi}\right)^{2k}dx\:,$$
where
$$g(x)=\sum_{l\geq 1}\frac{1-2\{lx\}}{l}\:,$$
a function that has been investigated by de la Bret\`eche and Tenenbaum \cite{bre, bre2},
as well as Balazard and Martin \cite{balaz1, balaz2}. Bettin \cite{bettin} could replace the interval $(1/2, 1)$ for $A_0$, $A_1$ by the interval $(0,1)$.\\
Improving on a result on the order of magnitude of $\int_0^1g(x)^{2k}dx$
obtained in the paper \cite{mr2}, the authors could obtain the following asymptotics
(Theorem 1.1 of \cite{mrasympt}):

\textit{Let $K\in\mathbb{N}$. There is an absolute constant $C>0$, such that
\[
\int_0^1|g(x)|^Kdx=\frac{e^{\gamma}}{\pi}\:\Gamma(K+1)(1+O(\exp(-CK))),\tag{1.1}
\]
for $K\rightarrow \infty$, where $\gamma$ is the Euler-Mascheroni constant.}\\

The authors are thankful to Goubi Mouloud for the information on the value $e^\gamma/\pi$ of the constant.\\
In this paper we extend the result (1.1) to all positive real values for the exponent $K$.
\begin{theorem}\label{thm11}
Let $K\in\mathbb{R}$, $K>0$. There is an absolute constant $C>0$, such that
\[
\int_0^1|g(x)|^Kdx=\frac{e^{\gamma}}{\pi}\:\Gamma(K+1)(1+O(\exp(-CK))),
\]
for $K\rightarrow \infty$, where $\gamma$ is the Euler-Mascheroni constant.
\end{theorem}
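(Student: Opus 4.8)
The plan is to bypass any attempt to interpolate the integer estimate (1.1) directly, and instead to recover both the integer and the real moments from a single, sharper analytic input: a precise description of the upper tail of the distribution of $|g|$. The reason interpolation fails is structural. If one only knows $\int_0^1|g|^n\,dx$ for integers $n$, then Hölder's inequality (equivalently the log-convexity of $K\mapsto\log\int_0^1|g|^K\,dx$) gives, for $n\le K\le n+1$, only
\[
\int_0^1|g|^K\,dx\le\Big(\int_0^1|g|^n\,dx\Big)^{n+1-K}\Big(\int_0^1|g|^{n+1}\,dx\Big)^{K-n},
\]
together with the corresponding lower bound. Feeding in (1.1) and the log-convexity of $\Gamma$ only reproduces the main term up to a factor $1+O(1/K)$, because the chord of $\log\Gamma$ over $[n+1,n+2]$ departs from the curve by $\asymp 1/n$. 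No finite amount of integer data can pin a fractional moment down to relative error $O(\exp(-CK))$; this is a moment-problem obstruction. Hence I would work with the distribution function itself.

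Concretely, set
\[
\Phi(t):=\mathrm{meas}\{x\in(0,1):|g(x)|>t\},\qquad t\ge 0 .
\]
Using $g(1-x)=-g(x)$, which follows at once from $1-2\{l(1-x)\}=-(1-2\{lx\})$, the measure $\Phi$ is controlled by the one-sided tail $\mathrm{meas}\{g>t\}$. The single input I would establish (or extract from the distributional analyses of de la Bret\`eche--Tenenbaum \cite{bre, bre2} and Balazard--Martin \cite{balaz1, balaz2}, and from the machinery already underlying (1.1) in \cite{mrasympt}) is the \emph{refined} exponential tail
\[
\Phi(t)=\frac{e^{\gamma}}{\pi}\,e^{-t}\bigl(1+O(e^{-ct})\bigr)\qquad(t\to\infty)
\]
for some absolute $c>0$. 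This is exactly the step where the constant $e^{\gamma}/\pi$ is produced, and where all arithmetic content resides; it is insensitive to whether an exponent is an integer. The hard part is precisely the secondary factor $1+O(e^{-ct})$: the leading asymptotic $\Phi(t)\sim (e^{\gamma}/\pi)e^{-t}$ alone would only yield the main term with a qualitative $1+o(1)$, and the exponential error in the Theorem is exactly as strong as the exponential error one can prove in the tail.

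Granting the tail estimate, the passage to moments is uniform in real $K>0$ and requires no integrality. First I would record, for every $K>0$, the integration-by-parts identity
\[
\int_0^1|g(x)|^K\,dx=K\int_0^\infty t^{K-1}\Phi(t)\,dt,
\]
whose boundary terms vanish since $t^K\Phi(t)\to 0$ as $t\to 0^+$ (because $K>0$) and as $t\to\infty$ (because $\Phi$ decays exponentially). Next I would split the integral at $t_0=\delta K$ with a fixed $\delta<1/e$. On $[0,t_0]$ one uses only $\Phi\le 1$, so the contribution is $\le t_0^{K}=(\delta K)^K$, which is $O(\Gamma(K+1)\,e^{-CK})$ since $(\delta e)^K$ decays exponentially. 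On $[t_0,\infty)$, where $t_0\to\infty$ makes the tail asymptotic valid, one substitutes it and evaluates by the exact formula $K\int_0^\infty t^{K-1}e^{-t}\,dt=\Gamma(K+1)$; the truncation to $[t_0,\infty)$ costs only $O((\delta e)^K\Gamma(K+1))$, and the secondary factor contributes
\[
\frac{e^{\gamma}}{\pi}\,K\int_0^\infty t^{K-1}e^{-(1+c)t}\,dt=\frac{e^{\gamma}}{\pi}\,\Gamma(K+1)\,(1+c)^{-K}.
\]
Collecting the pieces gives
\[
\int_0^1|g(x)|^K\,dx=\frac{e^{\gamma}}{\pi}\,\Gamma(K+1)\bigl(1+O(e^{-CK})\bigr),
\]
with any $C<\min\{\log(1+c),\log(1/(\delta e))\}$, valid for all real $K>0$. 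The entire argument is a Laplace-type evaluation with saddle at $t\approx K$; the integer case is recovered verbatim, which is the sense in which the real-exponent statement is no harder once the sharp tail is in hand.
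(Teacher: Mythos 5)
Your reduction of the moment asymptotics to a tail estimate is mechanically sound: the layer--cake identity, the truncation at $t_0=\delta K$ with $\delta<1/e$, the evaluation $K\int_0^\infty t^{K-1}e^{-(1+c)t}\,dt=(1+c)^{-K}\Gamma(K+1)$, and the symmetry $g(1-x)=-g(x)$ are all correct; so is your observation that H\"older/log-convexity interpolation from the integer moments can only yield a relative error $O(1/K)$, which is a genuine insight into why a new argument is needed. The problem is that the single substantive ingredient, the refined tail
\[
\Phi(t)=\frac{e^{\gamma}}{\pi}\,e^{-t}\bigl(1+O(e^{-ct})\bigr),
\]
is nowhere proved: it is declared to be ``established or extracted'' from the literature. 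None of the cited sources contains such a statement. The works of de la Bret\`eche--Tenenbaum, Balazard--Martin and Bettin give the local behaviour of $g$ and distributional convergence, not a level-set measure with \emph{exponentially small relative error}, and the machinery of \cite{mrasympt} is formulated for moments, not tails. Since you yourself concede that ``all arithmetic content resides'' in this step and that the error in the Theorem ``is exactly as strong as the exponential error one can prove in the tail,'' the proposal is an (admittedly clean) equivalent reformulation of the theorem rather than a proof of it.

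To actually establish that tail you would have to redo essentially everything the paper does: the decomposition $g(x)=l(x)+D(x,n)+H(x)+(-1)^{n+1}T^{n+1}\mathcal{W}(x)$ of Lemma \ref{lem210}; the exceptional sets $\mathcal{E}_1,\mathcal{E}_2$ of doubly-exponentially small measure outside of which $D(x,n)$ and $T^{n+1}\mathcal{W}(x)$ are $O(e^{-C_1K/2})$ and $H(x)=-A(1)+O(e^{-K/200})$ with $A(1)=\log 2\pi-\gamma$ (Lemmas \ref{lem36}--\ref{lem317}); and, crucially, the bound on the region $x\in(x_0,1/2)$ where $g$ is large near rationals of denominator $q\ge 2$ rather than near $0$ --- this is Lemma \ref{lem313}, resting on Lemma 2.22 of \cite{mr2}, and it is exactly the part your tail asymptotic sweeps under the $O(e^{-ct})$. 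The paper then sidesteps the tail formulation entirely by writing $g(x)^K=l(x)^K(1+R(x,n))^K$ on the good set $\mathcal{A}$ and expanding with the binomial series for \emph{real} exponents (Lemmas \ref{lem318}--\ref{lem321}); that expansion is the actual step that carries the integer-exponent result over to arbitrary real $K$, and it has no counterpart in your write-up.
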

\section{Overview and preliminary results}
Like in previous papers, a crucial role is played by the relation of $g(x)$ to 
Wilton's function, established by Balazard and Martin \cite{balaz2} and results about
operators related to continued fraction expansions due to Marmi, Moussa and Yoccoz \cite{Marmi}.\\
We recall some fundamental definitions and results from \cite{Marmi}. For the
proofs of Lemmas 2.2, 2.4, 2.6 of the present paper, see \cite{mr2}.
\begin{definition}\label{def21}
Let $X=(0,1)\setminus\mathbb{Q}$. Let $\alpha(x)=\{1/x\}$ for $x\in X$. The iterates
$\alpha_k$ of $\alpha$ are defined by $\alpha_0(x)=x$ and 
$$\alpha_k(x)=\alpha(\alpha_{k-1}(x)),\ \text{for}\ k>1.$$
\end{definition}
\begin{lemma}\label{lem22}
Let $x\in X$ and let
$$x=[a_0(x); a_1(x),\ldots, a_k(x),\ldots]$$
be the continued fraction expansion of $x$. We define the partial quotient of 
$p_k(x)$, $q_k(x)$:
$$\frac{p_k(x)}{q_k(x)}:=[a_0(x); a_1(x),\ldots, a_k(x)],\ \text{where},\ (p_k(x), q_k(x))=1\:.$$
Then we have 
$$a_k(x)=\left\lfloor \frac{1}{\alpha_{k-1}(x)}\right\rfloor\:,$$
$$p_{k+1}=a_{k+1}p_k+p_{k-1}$$
and
$$q_{k+1}=a_{k+1}q_k+q_{k-1}\:.$$
\end{lemma}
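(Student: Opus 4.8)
The plan is to prove the three assertions in turn: first the formula for the partial quotients $a_k(x)$, and then the two convergent recurrences, which are the classical facts of continued-fraction theory specialized to the Gauss map $\alpha$. The conceptual core is to tie the dynamics of $\alpha$ to the arithmetic of the expansion; once that link is in place, the recurrences are matrix bookkeeping.

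For the first identity I would establish by induction on $k\geq 1$ the \emph{tail relation}
$$\alpha_{k-1}(x)=[0;a_k(x),a_{k+1}(x),\ldots].$$
Since $x\in(0,1)$ is irrational we have $a_0(x)=0$, so the case $k=1$ is just $\alpha_0(x)=x=[0;a_1(x),a_2(x),\ldots]$. For the inductive step, write $\alpha_{k-1}(x)=[0;a_k(x),a_{k+1}(x),\ldots]=\bigl(a_k(x)+t\bigr)^{-1}$, where $t=[0;a_{k+1}(x),\ldots]\in(0,1)$ and $a_k(x)\geq 1$ is an integer. Taking reciprocals gives $1/\alpha_{k-1}(x)=a_k(x)+t$, whence $\lfloor 1/\alpha_{k-1}(x)\rfloor=a_k(x)$, which is exactly the first claim, and $\{1/\alpha_{k-1}(x)\}=t=[0;a_{k+1}(x),\ldots]$. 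But by the definition of the iterates $\{1/\alpha_{k-1}(x)\}=\alpha(\alpha_{k-1}(x))=\alpha_k(x)$, so the tail relation holds with $k$ replaced by $k+1$, closing the induction. (Irrationality of $x$ guarantees $t\in(0,1)$ strictly, so no boundary case in the floor arises.)

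For the convergent recurrences I would use the matrix encoding of continued fractions. Setting $M(a)=\begin{pmatrix} a & 1\\ 1 & 0\end{pmatrix}$, with $\det M(a)=-1$, and
$$\begin{pmatrix} P_k & P_{k-1}\\ Q_k & Q_{k-1}\end{pmatrix}=M(a_0(x))M(a_1(x))\cdots M(a_k(x)),$$
the columns of $M(a_0(x))$ supply the usual initial data $p_0=a_0(x)$, $q_0=1$, $p_{-1}=1$, $q_{-1}=0$. One checks that the Möbius transformation $z\mapsto (P_kz+P_{k-1})/(Q_kz+Q_{k-1})$ attached to this product is the composition of the maps $z\mapsto a_j(x)+1/z$, so evaluating at $z=\infty$ gives $P_k/Q_k=[a_0(x);a_1(x),\ldots,a_k(x)]$, with $Q_k>0$. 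Multiplying the product on the right by $M(a_{k+1}(x))$,
$$\begin{pmatrix} P_k & P_{k-1}\\ Q_k & Q_{k-1}\end{pmatrix}\begin{pmatrix} a_{k+1} & 1\\ 1 & 0\end{pmatrix}=\begin{pmatrix} a_{k+1}P_k+P_{k-1} & P_k\\ a_{k+1}Q_k+Q_{k-1} & Q_k\end{pmatrix},$$
and reading off the first column yields at once $P_{k+1}=a_{k+1}P_k+P_{k-1}$ and $Q_{k+1}=a_{k+1}Q_k+Q_{k-1}$.

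It remains to identify $(P_k,Q_k)$ with the reduced pair $(p_k(x),q_k(x))$, and this coprimality step is the one point requiring care. Taking determinants in the matrix definition gives $P_kQ_{k-1}-P_{k-1}Q_k=(-1)^{k+1}$, so any common divisor of $P_k$ and $Q_k$ divides $1$ and hence $(P_k,Q_k)=1$. Since $Q_k>0$ and a rational number has a unique reduced representation with positive denominator, we conclude $P_k=p_k(x)$ and $Q_k=q_k(x)$, and the two displayed recurrences become precisely $p_{k+1}=a_{k+1}p_k+p_{k-1}$ and $q_{k+1}=a_{k+1}q_k+q_{k-1}$. The substantive ingredient is the tail relation of the second paragraph; the recurrences are then routine matrix algebra, with the determinant identity furnishing exactly the coprimality needed to pass from the auxiliary quantities $P_k,Q_k$ to the genuine reduced convergents.
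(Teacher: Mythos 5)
Your proof is correct, but there is nothing in the paper to compare it against: the paper gives no proof of this lemma, instead referring the reader to \cite{mr2} for the proofs of Lemmas 2.2, 2.4 and 2.6 (these are classical facts of continued-fraction theory, found also in Hensley \cite{hens} or Marmi--Moussa--Yoccoz \cite{Marmi}). What you have written is the standard textbook derivation, carried out in full: the induction establishing the tail relation $\alpha_{k-1}(x)=[0;a_k(x),a_{k+1}(x),\ldots]$ is exactly the right bridge between the Gauss-map iterates of Definition \ref{def21} and the partial quotients, and your observation that irrationality keeps $t$ strictly inside $(0,1)$, so the floor never lands on a boundary, is the one point where genuine care is needed. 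Your matrix argument, with the determinant identity $P_kQ_{k-1}-P_{k-1}Q_k=(-1)^{k+1}$, also cleanly handles the normalization $(p_k(x),q_k(x))=1$ in the paper's statement, a point many informal treatments gloss over when identifying the recursively defined quantities with the reduced convergents. Two trivial remarks: the positivity $Q_k>0$ that you invoke deserves its one-line induction from $Q_0=1$, $Q_{-1}=0$ and $a_j\geq 1$ for $j\geq 1$; and your inductive step uses $\alpha_k(x)=\alpha(\alpha_{k-1}(x))$ already at $k=1$, which is what Definition \ref{def21} plainly intends even though it writes ``for $k>1$'' (a typo for $k\geq 1$). Neither affects correctness.
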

\begin{definition}\label{def23}
Let $x\in X$. Let also
$$\beta_k(x):=\alpha_0(x)\alpha_1(x)\cdots \alpha_k(x),\ \beta_{-1}(x)=1$$
$$\gamma_k(x):=\beta_{k-1}(x)\log\frac{1}{\alpha_k(x)},\ \text{where}\ k\geq 0,$$
so that $\gamma_0(x):=\log(1/x)$.\\
The number $x$ is called a \textbf{Wilton number} if the series
$$\sum_{k\geq 0}(-1)^k\gamma_k(x)$$
converges.\\
Wilton's function $\mathcal{W}(x)$ is defined by
$$\mathcal{W}(x)=\sum_{k\geq 0}(-1)^k\gamma_k(x)$$
for each Wilton number $x\in (0,1)$.
\end{definition}
\begin{lemma}\label{lem24}
A number $x\in X$ is a Wilton number if and only if $\alpha(x)$ is a 
Wilton number. In this case we have:
$$\mathcal{W}(x)=\log\frac{1}{x}-x\mathcal{W}(\alpha(x)).$$
\end{lemma}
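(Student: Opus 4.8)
The plan is to exploit the self-similar structure of the Gauss map $\alpha$ underlying the continued-fraction algorithm, reducing both assertions to a single exact identity between the partial sums of the two Wilton series. First I would record that $\alpha(x)\in X$ whenever $x\in X$, since $1/x$ is an irrational number exceeding $1$ and hence $\{1/x\}$ is irrational in $(0,1)$; thus all the quantities $\alpha_k,\beta_k,\gamma_k$ below are well defined for both $x$ and $\alpha(x)$. The cornerstone is the shift identity $\alpha_k(\alpha(x))=\alpha_{k+1}(x)$, valid for all $k\geq 0$, which follows immediately by induction from $\alpha_0(\alpha(x))=\alpha(x)=\alpha_1(x)$ together with the defining recursion $\alpha_k=\alpha\circ\alpha_{k-1}$.

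Writing $y=\alpha(x)$, I would next propagate this shift through the auxiliary products and terms. Since $\beta_k(y)=\prod_{i=0}^{k}\alpha_i(y)=\prod_{j=1}^{k+1}\alpha_j(x)$ while $\beta_{k+1}(x)=\alpha_0(x)\prod_{j=1}^{k+1}\alpha_j(x)=x\,\beta_k(y)$, one obtains $\beta_k(y)=\beta_{k+1}(x)/x$ for all $k\geq -1$ (the case $k=-1$ being the consistency check $1=\beta_0(x)/x$). Feeding this into the definition of $\gamma_k$ yields, uniformly for $k\geq 0$,
\[
\gamma_k(y)=\beta_{k-1}(y)\log\frac{1}{\alpha_k(y)}=\frac{\beta_k(x)}{x}\log\frac{1}{\alpha_{k+1}(x)}=\frac{1}{x}\,\gamma_{k+1}(x),
\]
where at $k=0$ the factor $\beta_{-1}(y)=1$ makes this reduce to $\gamma_0(y)=\log(1/y)=\tfrac1x\gamma_1(x)$. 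Inserting this term-by-term relation into the partial sums $S_N(x):=\sum_{k=0}^N(-1)^k\gamma_k(x)$, re-indexing by $j=k+1$, and isolating the $k=0$ term $\gamma_0(x)=\log(1/x)$, a short computation gives the exact finite identity
\[
S_{N+1}(x)=\log\frac{1}{x}-x\,S_N(\alpha(x)),
\]
valid for every $N\geq 0$. Because $x\neq 0$, the left-hand side converges as $N\to\infty$ exactly when $S_N(\alpha(x))$ does; this establishes in one stroke, in both directions, that $x$ is a Wilton number if and only if $\alpha(x)$ is. Passing to the limit in the displayed recursion then gives $\mathcal{W}(x)=\log(1/x)-x\,\mathcal{W}(\alpha(x))$, which is precisely the asserted functional equation.

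The argument is essentially algebraic, so there is no analytic obstacle; the one point demanding care is the bookkeeping of the indices and of the boundary term at $k=0$, which is where the two alternating series telescope into each other. Concretely, the sign $(-1)^k$ becomes $(-1)^k=-(-1)^{k+1}$ under the shift $j=k+1$, and it is exactly this sign flip, combined with the extraction of $\gamma_0(x)=\log(1/x)$ from the re-indexed sum, that produces both the $\log(1/x)$ summand and the factor $-x$ in the recursion. I would therefore write out the re-indexing and the sign explicitly to confirm them, after which both halves of the lemma follow with no further input.
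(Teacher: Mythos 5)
Your proof is correct: the shift identity $\alpha_k(\alpha(x))=\alpha_{k+1}(x)$, the resulting relations $\beta_k(\alpha(x))=\beta_{k+1}(x)/x$ and $\gamma_k(\alpha(x))=\gamma_{k+1}(x)/x$, and the exact partial-sum identity $S_{N+1}(x)=\log(1/x)-x\,S_N(\alpha(x))$ are all verified accurately, including the boundary cases at $k=0$ and $\beta_{-1}$, and they yield both the equivalence of convergence and the functional equation in one stroke. The paper gives no proof of this lemma, deferring to \cite{mr2} (where the argument, going back to Balazard and Martin \cite{balaz2}, is exactly this re-indexing of the alternating series), so your write-up coincides with the intended proof and is in fact more self-contained than the paper's citation.
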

\begin{definition}\label{def25}
Let $p>1$ and $T\::\: L^p\rightarrow L^p$ be defined by 
$$Tf(x):=xf(\alpha(x)).$$
The measure $m$ is defined by 
$$m(\mathcal{E}):=\frac{1}{\log 2}\int_{\mathcal{E}}\frac{dx}{1+x},$$
where $\mathcal{E}$ is any measurable subset of $(0,1)$.
\end{definition}
\begin{lemma}\label{lem26}
Let $p>1$, $n\in \mathbb{N}$.\\
(i) The measure $m$ is invariant with respect to the map $\alpha$, i.e.
$$m(\alpha(\mathcal{E}))=m(\mathcal{E})\:,$$
for all measurable subsets of $\mathcal{E}\subset (0,1)$.\\
(ii) For $f\in L^p$ we have 
$$\int_0^1|T^nf(x)|^p dm(x)\leq g^{(n-1)p}\int_0^1|f(x)|^pdm(x),$$
where 
$$g:=\frac{\sqrt{5}-1}{2}<1.$$
\end{lemma}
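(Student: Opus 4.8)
The plan is to prove the two parts in turn, deducing (ii) from (i). For part (i) I would establish the invariance in the usable change-of-variables form
$$\int_0^1 f(\alpha(x))\,dm(x)=\int_0^1 f(x)\,dm(x),$$
equivalently $m(\alpha^{-1}(\mathcal{E}))=m(\mathcal{E})$, which is the classical invariance of the Gauss measure. First I would partition $(0,1)$ into the intervals $I_k=\left(\frac{1}{k+1},\frac1k\right)$ for $k\geq 1$, on each of which $\alpha(x)=\frac1x-k$. On $I_k$ I substitute $y=\frac1x-k$, so that $x=\frac{1}{y+k}$ and a short computation gives $\frac{dx}{1+x}=\frac{dy}{(y+k)(y+k+1)}$. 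Summing over $k\geq 1$ and using the telescoping identity $\sum_{k\geq 1}\frac{1}{(y+k)(y+k+1)}=\frac{1}{1+y}$ collapses the family of integrals back to $\int_0^1 f(y)\,dm(y)$, which proves the invariance.

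For part (ii), the first step is the closed form
$$T^nf(x)=\beta_{n-1}(x)\,f(\alpha_n(x)),$$
proved by induction on $n$ from $Tf(x)=xf(\alpha(x))$: the base case is $Tf(x)=\alpha_0(x)f(\alpha_1(x))$, and the inductive step uses $\alpha_k(\alpha(x))=\alpha_{k+1}(x)$ together with $\beta_{n-1}=\alpha_0\alpha_1\cdots\alpha_{n-1}$. Raising to the $p$-th power and integrating against $m$ gives
$$\int_0^1|T^nf(x)|^p\,dm(x)=\int_0^1\beta_{n-1}(x)^p\,|f(\alpha_n(x))|^p\,dm(x).$$
Since $\alpha_n$ is the $n$-fold iterate of $\alpha$, applying part (i) $n$ times with $h=|f|^p$ yields $\int_0^1|f(\alpha_n(x))|^p\,dm(x)=\int_0^1|f(x)|^p\,dm(x)$, so the whole estimate reduces to the pointwise bound $\beta_{n-1}(x)\leq g^{n-1}$.

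I expect this pointwise bound to be the main obstacle. Using the relations of Lemma \ref{lem22} one writes $x=\frac{p_n+p_{n-1}\alpha_n(x)}{q_n+q_{n-1}\alpha_n(x)}$, from which the determinant identity $p_nq_{n-1}-p_{n-1}q_n=(-1)^{n-1}$ gives
$$\beta_{n-1}(x)=|q_{n-1}x-p_{n-1}|=\frac{1}{q_n+q_{n-1}\alpha_n(x)}\leq\frac{1}{q_n}.$$
The recursion $q_n=a_nq_{n-1}+q_{n-2}\geq q_{n-1}+q_{n-2}$ from Lemma \ref{lem22}, with $q_0=1$ and $q_1\geq 1$, forces $q_n\geq g^{-(n-1)}$, the golden ratio $g^{-1}=\frac{\sqrt5+1}{2}$ being the growth rate of the Fibonacci recursion; this is precisely where the constant $g$ enters. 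Hence $\beta_{n-1}(x)\leq g^{n-1}$ pointwise, so $\beta_{n-1}(x)^p\leq g^{(n-1)p}$, and combining with the reduction above gives
$$\int_0^1|T^nf(x)|^p\,dm(x)\leq g^{(n-1)p}\int_0^1|f(x)|^p\,dm(x).$$
The only genuinely delicate points are verifying the identity for $\beta_{n-1}$ and the sharp constant $g^{-1}$ in the lower bound for $q_n$ (equivalently $\beta_{n-1}\leq g^{n-1}$ rather than a weaker geometric bound); the remaining steps are a routine change of variables, an induction, and an application of part (i).
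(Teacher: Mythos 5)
Your proof is correct. The paper itself contains no proof of this lemma --- it only points to \cite{mr2}, which in turn rests on Marmi--Moussa--Yoccoz \cite{Marmi} --- and the argument you give (invariance of the Gauss measure via the interval-wise substitution $y=1/x-k$ and the telescoping sum, the identity $T^nf(x)=\beta_{n-1}(x)f(\alpha_n(x))$, and the continuant bound $\beta_{n-1}(x)=|q_{n-1}x-p_{n-1}|\le 1/q_n\le g^{n-1}$ forced by the Fibonacci growth of $q_n$) is exactly the standard one underlying that citation, with the sharp constant. One minor remark: you correctly read part (i) as $m(\alpha^{-1}(\mathcal{E}))=m(\mathcal{E})$ (equivalently the change-of-variables form); as literally written with the forward image the statement would fail, since $\alpha$ is not injective.
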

\begin{definition}\label{def27}
For $n\in\mathbb{N}$, $x\in X$, we define
$$\mathcal{L}(x,n):=\sum_{v=0}^n(-1)^v(T^vl)(x),$$
where $l(x):=\log\left(\frac{1}{x}\right)$,
$$D(x,n):=\mathcal{L}(x,n)-l(x).$$
\end{definition}
We recall the following definitions from \cite{mrasympt}.
\begin{definition}\label{def28}
For $\lambda\geq 0$, we set
\begin{align*}
&A(\lambda):=\int_0^\infty\{t\}\{\lambda t\}\frac{dt}{t^2}\:,\\
&F(x):=\frac{x+1}{2}A(1)-A(x)-\frac{x}{2}\log x\:,\\
&H(x):=2\sum_{j\geq 0}(-1)^j\beta_{j-1}(x)F(\alpha_j(x))\:,\\
&B_1(t):=t-\lfloor t\rfloor -1/2,\ \text{the first Bernoulli function}\:,\\
&B_2(t):=\{t\}^2-\{t\}+1/6,\ (t\in\mathbb{R}) \ \text{the second Bernoulli function}\:.
\end{align*}
For $\lambda\in\mathbb{R}$, let 
$$\Phi_2(\lambda):=\sum_{n\geq 1}\frac{B_2(n\lambda)}{n^2}\:.$$
\end{definition}
\begin{lemma}\label{lem29}
It holds
$$A(\lambda)=\frac{\lambda}{2}\log\frac{1}{\lambda}+\frac{1+A(1)}{2}\:\lambda+
O(\lambda^2),\ \ \text{as}\ \lambda\rightarrow 0\:.$$
\end{lemma}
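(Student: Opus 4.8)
The plan is to prove the asymptotic by splitting the integral at the point $t=1/\lambda$, where the inner fractional part changes character. I would write $A(\lambda)=I_1+I_2$ with $I_1=\int_0^{1/\lambda}\{t\}\{\lambda t\}\,t^{-2}\,dt$ and $I_2=\int_{1/\lambda}^\infty\{t\}\{\lambda t\}\,t^{-2}\,dt$. On the range of $I_1$ one has $0\le \lambda t<1$, hence $\{\lambda t\}=\lambda t$ exactly, and $I_1$ collapses to
\[
I_1=\lambda\int_0^{1/\lambda}\frac{\{t\}}{t}\,dt.
\]
The entire logarithmic main term will come from this piece.

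Next I would evaluate $\int_0^{1/\lambda}\{t\}/t\,dt$ to an additive accuracy of $O(\lambda)$. Writing $\{t\}=\tfrac12+B_1(t)$ splits the integral over $(1,1/\lambda)$ as $\tfrac12\log(1/\lambda)+\int_1^{1/\lambda}B_1(t)/t\,dt$, to which one adds the trivial contribution $\int_0^1\{t\}/t\,dt=1$. Since $B_1$ admits the bounded periodic antiderivative $\tfrac12 B_2$, one integration by parts shows $\int_1^X B_1(t)/t\,dt=c_1+O(1/X)$, where $c_1=\int_1^\infty B_1(t)/t\,dt$ is then computed in closed form by a second integration by parts: reducing everything to the standard integrals $\int_1^\infty\{t\}/t^2\,dt=1-\gamma$ and $\int_1^\infty\{t\}^2/t^2\,dt=A(1)-1$ gives $c_1=\tfrac12 A(1)+\tfrac12\gamma-1$. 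Substituting $X=1/\lambda$ and incorporating the $+1$ yields
\[
I_1=\frac{\lambda}{2}\log\frac{1}{\lambda}+\frac{A(1)+\gamma}{2}\,\lambda+O(\lambda^2).
\]

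For the tail I would substitute $u=\lambda t$, obtaining $I_2=\lambda\int_1^\infty\{u\}\{u/\lambda\}\,u^{-2}\,du$, and again write $\{u/\lambda\}=\tfrac12+B_1(u/\lambda)$. The constant part contributes $\tfrac{\lambda}{2}\int_1^\infty\{u\}/u^2\,du=\tfrac{1-\gamma}{2}\lambda$, which is exactly the term that cancels the spurious $\gamma$ produced by $I_1$. It then remains to show that the oscillatory remainder $\lambda\int_1^\infty \{u\}B_1(u/\lambda)u^{-2}\,du$ is $O(\lambda^2)$, i.e. that the integral itself is $O(\lambda)$. This is the main obstacle: $B_1(u/\lambda)$ oscillates with period $\lambda\to0$, and its periodic antiderivative is $\tfrac{\lambda}{2}B_2(u/\lambda)=O(\lambda)$, so an integration by parts on each interval $(n,n+1)$ — carefully accounting for the unit jumps of $\{u\}$ at the integers, each weighted by the bounded quantity $\lambda B_2(n/\lambda)$ — bounds the remainder by $O(\lambda)\sum_{n\ge1}n^{-2}=O(\lambda)$.

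Assembling the three pieces, the $\gamma$-terms from $I_1$ and $I_2$ cancel and the surviving linear coefficient is $\tfrac12(1+A(1))$, giving
\[
A(\lambda)=\frac{\lambda}{2}\log\frac{1}{\lambda}+\frac{1+A(1)}{2}\,\lambda+O(\lambda^2),
\]
as claimed. The only genuinely delicate point is the sharp $O(\lambda)$ control of the oscillatory tail; everything else is a sequence of elementary, if slightly lengthy, manipulations with the Bernoulli functions $B_1,B_2$ and the classical $\gamma$-integral.
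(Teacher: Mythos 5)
Your proof is correct, but it takes a genuinely different route from the paper. The paper's proof is essentially a two-line deduction from a cited exact identity: Proposition 31, formula (74) of Balazard--Martin \cite{balaz2} gives
$A(\lambda)=\frac{\lambda}{2}\log\frac{1}{\lambda}+\frac{1+A(1)}{2}\lambda+\frac{\lambda^2}{2}\Phi_2(1/\lambda)-\int_{1/\lambda}^\infty\Phi_2(t)\,t^{-3}dt$,
and the authors merely observe that $\Phi_2$ is bounded, so the last two terms are $O(\lambda^2)$. You instead rederive the asymptotic from scratch: splitting at $t=1/\lambda$, using $\{\lambda t\}=\lambda t$ on the first range, evaluating $\int_1^\infty B_1(t)/t\,dt$ via the periodic antiderivative $\tfrac12 B_2$ and the classical integrals $\int_1^\infty\{t\}t^{-2}dt=1-\gamma$ and $\int_1^\infty\{t\}^2t^{-2}dt=A(1)-1$, and controlling the oscillatory tail by interval-by-interval integration by parts. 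I checked your constants: $c_1=\tfrac12A(1)+\tfrac12\gamma-1$ is right, the $\gamma$-terms do cancel between $I_1$ and $I_2$, and your integration by parts on each $(n,n+1)$ (where the boundary term at $u=n$ vanishes and the one at $u=n+1$ is $O(\lambda/n^2)$) does give the needed $O(\lambda)$ bound for the tail. What your approach buys is self-containedness -- you effectively reprove the relevant special case of the Balazard--Martin identity rather than citing it -- at the cost of length and of the delicate tail estimate you correctly flag; the paper's approach buys brevity at the cost of importing a nontrivial external formula.
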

\begin{proof}
By \cite{balaz2}, Proposition 31, formula (74), we have:
$$A(\lambda)=\frac{\lambda}{2}\log\frac{1}{\lambda}+\frac{1+A(1)}{2}\:\lambda+\frac{\lambda^2}{2}\Phi_2\left(\frac{1}{\lambda}\right)-\int_{1/\lambda}^\infty\Phi_2(t)\frac{dt}{t^3}\:.$$
From Definition \ref{def28}, it follows that $\Phi_2(t)$ is bounded. 
Therefore
$$\frac{\lambda^2}{2}\Phi_2\left(\frac{1}{\lambda}\right)=O(\lambda^2)$$
and
$$\int_{1/\lambda}^\infty\Phi_2(t)\frac{dt}{t^3}=O(\lambda^2).$$
\end{proof}
\begin{lemma}\label{lem210}
We have
$$g(x)=l(x)+D(x,n)+H(x)+(-1)^{n+1}T^{n+1}\mathcal{W}(x).$$
\end{lemma}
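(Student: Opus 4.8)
The plan is to split the asserted identity into two independent parts: an exact ``Wilton side'', handled by iterating the functional equation of Lemma \ref{lem24}, and the relation between $g$ and Wilton's function, which carries the analytic content. Reading the operator $T$ of Definition \ref{def25} as acting on the functions $l$, $\mathcal{W}$ and $H$, I would reduce the claim to the two statements
\begin{align*}
\mathcal{W}(x)&=l(x)+D(x,n)+(-1)^{n+1}(T^{n+1}\mathcal{W})(x),\\
g(x)&=\mathcal{W}(x)+H(x).
\end{align*}
Adding $H(x)$ to the first and substituting into the second yields exactly the formula of the lemma, so it suffices to prove these two separately.

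First I would dispose of the Wilton side. Lemma \ref{lem24} states $\mathcal{W}(x)=l(x)-x\mathcal{W}(\alpha(x))$, which is precisely the operator identity $\mathcal{W}=l-T\mathcal{W}$. Iterating it $n+1$ times gives
$$\mathcal{W}=\sum_{v=0}^{n}(-1)^vT^vl+(-1)^{n+1}T^{n+1}\mathcal{W}.$$
A short induction using Lemma \ref{lem22} shows $(T^vl)(x)=\beta_{v-1}(x)\log(1/\alpha_v(x))=\gamma_v(x)$, so the finite sum is exactly $\mathcal{L}(x,n)$ in the sense of Definition \ref{def27}, and $\mathcal{L}(x,n)=l(x)+D(x,n)$ by that same definition. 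This settles the first identity; the only point to monitor is convergence, i.e. that $x$ is a Wilton number, a property which by Lemma \ref{lem24} propagates along the orbit of $\alpha$ and holds for almost every $x$.

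The substantial part is the relation $g=\mathcal{W}+H$, the link between $g$ and Wilton's function due to Balazard and Martin. I would establish it through three functional equations for $T$. A direct reindexing of the defining series of $H$ in Definition \ref{def28}, using $\beta_j(x)=x\beta_{j-1}(\alpha(x))$ and $\alpha_{j+1}(x)=\alpha_j(\alpha(x))$, yields $H=2F-TH$; Lemma \ref{lem24} gives $\mathcal{W}=l-T\mathcal{W}$; and the Balazard--Martin functional equation for $g$ reads $g(x)=l(x)+2F(x)-xg(\alpha(x))$, i.e. $g=l+2F-Tg$. Consequently $u:=g-\mathcal{W}-H$ satisfies $u=-Tu$, hence $u=(-1)^{N}T^{N}u$ for every $N$. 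The main obstacle is to conclude $u\equiv 0$ from this fixed-point relation: here I would invoke the contraction estimate of Lemma \ref{lem26}(ii), which gives $\|T^{N}u\|\le g^{N-1}\|u\|\to 0$ in the $L^p$-norm with respect to the measure $m$ (with the contraction factor $g=(\sqrt5-1)/2<1$), while $\|u\|=\|T^{N}u\|$ is constant in $N$; hence $u=0$ in $L^p$, and therefore almost everywhere.

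The delicate points, which I expect to absorb most of the work, are two: verifying that $g$, $\mathcal{W}$ and $H$ all lie in a common space $L^p$ with $p>1$ so that Lemma \ref{lem26}(ii) genuinely applies to $u$, and checking that the three functional equations, together with the convergence of the series defining $\mathcal{W}$ and $H$, hold simultaneously on the same full-measure set of Wilton numbers. Once these are in place the identity of the lemma follows by combining the two established statements.
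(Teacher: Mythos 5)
Your decomposition is the same as the paper's: the identity splits into $\mathcal{W}(x)=\mathcal{L}(x,n)+(-1)^{n+1}T^{n+1}\mathcal{W}(x)$ (the iterated Wilton functional equation) and $g=\mathcal{W}+H$. For the first part the paper simply cites formula (3) of \cite{mr2}; your derivation by iterating Lemma \ref{lem24} and checking $(T^vl)(x)=\gamma_v(x)$, so that the partial sum is $\mathcal{L}(x,n)=l(x)+D(x,n)$, is a correct proof of that cited formula. The genuine divergence is in the second part. The paper notes $g=-2\Phi_1$ and quotes Proposition 2 of Balazard--Martin \cite{balaz2}, which asserts $\Phi_1=-\tfrac12(\mathcal{W}+H)$ almost everywhere, and stops there. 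You instead re-derive this a.e.\ identity from three functional equations ($\mathcal{W}=l-T\mathcal{W}$, $H=2F-TH$, $g=l+2F-Tg$) together with the contraction estimate of Lemma \ref{lem26}(ii): the difference $u=g-\mathcal{W}-H$ satisfies $u=(-1)^NT^Nu$, hence $\|u\|_{L^p(m)}\le g^{N-1}\|u\|_{L^p(m)}$ for every $N$, forcing $u=0$ a.e. The mechanics are sound --- the reindexing that gives $H=2F-TH$ checks out against Definition \ref{def28}, and the integrability hypotheses you flag are available ($H$ is bounded and $\mathcal{W}\in L^2(m)$, both used in Lemma \ref{lem38}). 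What your route buys is a self-contained reduction of the Balazard--Martin identity to a single functional equation plus the spectral gap of $T$; what it costs is that the functional equation $g=l+2F-Tg$ is itself the nontrivial analytic content of Balazard--Martin's work (it encodes the asymptotic/modular behaviour of $A(\lambda)$ via $F$), so the external input has not been eliminated, only pushed one step further back. With that citation granted, your argument proves the lemma.
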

\begin{proof}
From formula (3) of \cite{mr2} we have:
\[
\mathcal{W}(x)=\mathcal{L}(x,n)+(-1)^{n+1}T^{n+1}\mathcal{W}(x).\tag{2.1}
\]
In \cite{balaz2} the function $\Phi_1$ is defined by
\[
\Phi_1(t):=\sum_{n\geq 1}\frac{B_1(nt)}{n}=\sum_{n\geq 1}\frac{\{nt\}-1/2}{n}\:.\tag{2.1}
\]
Thus we have
\[
g(x)=-2\Phi_1(x)\:.\tag{2.2}
\]
By Proposition (2) of \cite{balaz2} we obtain
\[
\Phi_1(x)=-\frac{1}{2}\mathcal{W}(x)-\frac{1}{2}H(x)\tag{2.3}
\]
almost everywhere.\\
The proof of Lemma \ref{lem210} follows now from (2.1), (2.2), (2.3) 
and Definition \ref{def27}
\end{proof}
\section{Proof of Theorem \ref{thm11}}
\begin{definition}\label{def31}
Let $d, h\in \mathbb{N}_0$, $h\geq 1$, $u, v\in(0,\infty)$. Then we 
define
$$\mathcal{J}(d, h, u, v):=\{x\in X\::\: T^dl(x)\geq u\ \text{and}\ T^{d+h}l(x)\geq v\}\:.$$
\end{definition}
\begin{lemma}\label{lem32}
We have
$$m(\mathcal{J}(d, h, u, v))\leq 2\exp\left(-2^{\frac{h-2}{2}}v\exp\left(2^{\frac{d-2}{2}}u\right)\right)$$
\end{lemma}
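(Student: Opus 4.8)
The plan is to understand the geometry of the set $\mathcal{J}(d,h,u,v)$ in terms of the iterates $\alpha_k$, since the defining conditions involve $T^d l$ and $T^{d+h} l$. First I would recall that $T^v l(x) = \beta_{v-1}(x)\, l(\alpha_v(x)) = \beta_{v-1}(x)\log(1/\alpha_v(x))=\gamma_v(x)$, so the condition $T^d l(x)\geq u$ reads $\beta_{d-1}(x)\log(1/\alpha_d(x))\geq u$, and similarly $T^{d+h}l(x)\geq v$ reads $\beta_{d+h-1}(x)\log(1/\alpha_{d+h}(x))\geq v$. Since each $\alpha_j\in(0,1)$, the products $\beta_{d-1}$ and $\beta_{d+h-1}$ are at most $1$, so these inequalities force $\alpha_d(x)$ and $\alpha_{d+h}(x)$ to be very small; equivalently, the partial quotients $a_{d+1}(x)=\lfloor 1/\alpha_d(x)\rfloor$ and $a_{d+h+1}(x)$ must be large. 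The key is to quantify how large, and then to estimate the measure of the corresponding cylinder sets.

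The main technical device will be the invariance of the measure $m$ under $\alpha$ (Lemma \ref{lem26}(i)) together with a comparison between $m$ and Lebesgue measure: since $m(\mathcal{E})=\tfrac{1}{\log 2}\int_{\mathcal{E}}\tfrac{dx}{1+x}$, we have $\tfrac{1}{2\log 2}\,|\mathcal{E}|\leq m(\mathcal{E})\leq \tfrac{1}{\log 2}\,|\mathcal{E}|$. The plan is to first bound the measure of the event that $\alpha_d(x)$ is small. The contraction estimate of Lemma \ref{lem26}(ii) controls the growth of $\beta_{d-1}=\prod_{j<d}\alpha_j$: one expects $\beta_{d-1}(x)\lesssim g^{\,d}$ off a set of exponentially small measure (this is the source of the doubly-exponential factor $\exp(2^{(d-2)/2}u)$, coming from $g^{\,d}\approx 2^{-d/2}$ since $g=\tfrac{\sqrt5-1}{2}$ and $g^2=1-g\approx 1/2$, so $1/g\approx 2^{1/2}$). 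Then, conditionally on the value of $\beta_{d-1}$, the requirement $\log(1/\alpha_d)\geq u/\beta_{d-1}$ forces $\alpha_d\leq \exp(-u/\beta_{d-1})\leq \exp(-2^{(d-2)/2}u)$, whose preimage under $\alpha_d$ has small measure by invariance.

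The strategy for combining the two conditions is to iterate: after pinning down that $\alpha_d$ is small, I would apply the same reasoning to the second condition, but now the factor $\beta_{d+h-1}$ splits as $\beta_{d-1}\cdot \alpha_d\cdots\alpha_{d+h-1}$, and the contraction over the further $h$ steps contributes another factor comparable to $g^{\,h}\approx 2^{-h/2}$, producing the outer $2^{(h-2)/2}$ in the exponent. Using invariance of $m$ under $\alpha^{d}$ to shift so that the first small-value constraint becomes a constraint at the origin, the measure of the set where $\alpha_{d+h}$ is forced below $\exp\bigl(-v\,\exp(2^{(d-2)/2}u)\,2^{(h-2)/2}\bigr)$ is bounded by that quantity up to the constant $2$. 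Assembling these two layers of smallness gives the claimed bound $m(\mathcal{J}(d,h,u,v))\leq 2\exp\bigl(-2^{(h-2)/2}v\exp(2^{(d-2)/2}u)\bigr)$.

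The hard part will be making the contraction estimate of Lemma \ref{lem26}(ii) interact cleanly with the pointwise condition on $\alpha_d$ and $\alpha_{d+h}$: the contraction bound is an $L^p$ statement about $T^n f$, whereas the set $\mathcal{J}$ is defined by pointwise lower bounds, so I expect the delicate step to be converting the measure-theoretic decay of $\|T^n l\|_p$ into the doubly-exponential tail bound via a Chebyshev/Markov-type argument and carefully tracking the exponents $2^{(d-2)/2}$ and $2^{(h-2)/2}$ through the two nested applications. Keeping the absolute constant at $2$ (rather than letting it grow with $d$ and $h$) will require choosing $p$ appropriately and absorbing lower-order contributions, and this bookkeeping is where I anticipate the real work to lie.
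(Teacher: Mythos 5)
The paper itself disposes of this lemma by citation (it is Lemma~2.13 of \cite{mr2}), so there is no in-paper argument to match; judged on its own terms, your plan contains the right skeleton but rests on a mechanism that cannot deliver the stated bound. The correct opening move is exactly as you say: $T^vl(x)=\beta_{v-1}(x)\log(1/\alpha_v(x))=\gamma_v(x)$, so the two defining conditions force $\alpha_d(x)$ and $\alpha_{d+h}(x)$ to be small, and the final step is the invariance of $m$ under $\alpha$ applied to the event $\{\alpha_{d+h}(x)\le\varepsilon\}$, giving measure $\log(1+\varepsilon)/\log 2\le 2\varepsilon$. The gap is in how you propose to control $\beta_{d-1}$ and $\beta_{d+h-1}$. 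You assert that $\beta_{d-1}(x)\lesssim g^{d}$ only ``off a set of exponentially small measure,'' sourced from the $L^p$ contraction of Lemma~\ref{lem26}(ii), and you flag a Chebyshev/Markov conversion as the hard step. Both parts of that are problematic. First, a Markov inequality applied to $\|T^n l\|_p^p\le g^{(n-1)p}\|l\|_p^p$ yields at best single-exponential decay in $u$ after optimizing over $p$ (since $\|l\|_p\sim p/e$), and can never produce the doubly exponential factor $\exp(-v\exp(2^{(d-2)/2}u))$. Second, and more fatally for the stated inequality, any exceptional set of measure $e^{-cd}$ would have to be added to the final bound, and it would swamp the doubly-exponentially small main term; the lemma as stated has no such additive term, so an ``off a small set'' argument cannot close.

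What actually makes the proof work is that the bound on $\beta_k$ is deterministic and pointwise: from Lemma~\ref{lem22} one has $\beta_k(x)\le 1/q_{k+1}(x)$ and $q_{k+1}\ge F_{k+2}\ge g^{-k}$ by the Fibonacci recursion, so $\beta_k(x)\le g^k\le 2^{-k/2}$ for \emph{every} $x\in X$. With that in hand the argument is two lines: $T^dl(x)\ge u$ forces $\alpha_d(x)\le\exp(-u/\beta_{d-1}(x))\le\exp(-2^{(d-2)/2}u)$; then $\beta_{d+h-1}=\beta_{d-1}\,\alpha_d\,(\alpha_{d+1}\cdots\alpha_{d+h-1})\le \exp(-2^{(d-2)/2}u)\cdot 2^{-(h-2)/2}$, so $T^{d+h}l(x)\ge v$ forces $\alpha_{d+h}(x)\le\exp(-2^{(h-2)/2}v\exp(2^{(d-2)/2}u))$, and invariance of $m$ finishes. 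No $L^p$ estimate, no Markov inequality, and no exceptional set are needed; the step you identified as the crux is a detour that does not reach the target.
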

\begin{proof}
This is Lemma 2.13 of \cite{mr2}.
\end{proof}
We recall the following definition from \cite{mr2}.
\begin{definition}\label{def34}(Definition 2.14 of \cite{mr2})\\
Let $L\in\mathbb{N}$. We set $j_0:=L-\left\lfloor\frac{L}{100}\right\rfloor$, $C_1:=1/400.$ For 
$j\in \mathbb{Z}$, $j\leq j_0$, we define the intervals:
$$I(L,j):=\left(x^{(j-1)}, x^{(j)}\right),\ \text{where } x^{(j)}:=\exp(-L+j).$$
For $v\in\mathbb{N}_0$, we set
$$a(L,v):=\exp(-C_1L+v)$$
$$\mathcal{T}(L, j, 0):=\{x\in I(L, j)\cap X\::\: |D(x,n)|\leq \exp(-C_1L)\}\:,$$
and for $v\in\mathbb{N}$, we set
$$\mathcal{T}(L, j, v):=\{x\in I(L, j)\cap X\::\: a(L, v-1)\leq |D(x,n)|\leq a(L, v)\}\:.$$
For $v, h\in\mathbb{Z}$, $h\geq 0$, we set
$$U(L, j, v, h):=\{x\in \mathcal{T}(L, j, v)\::\: T^hl(x)\geq 2^{-h}a(L, v-1)\}\:.$$
\end{definition}
\begin{lemma}\label{lem35}
There are constants $C_2, C_3>0$, such that for $v\geq 1$, we have
$$m(\mathcal{T}(L, j, v))\leq C_2\exp\left(-C_3\exp\left(-C_1L+v-1+\frac{1}{2}(L-j)\right)\right)\:.$$
\end{lemma}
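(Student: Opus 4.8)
The plan is to exploit the alternating structure of $D(x,n)=\sum_{h=1}^{n}(-1)^{h}T^{h}l(x)$ together with the sets $U(L,j,v,h)$ of Definition \ref{def34}. First I would observe that on $\mathcal{T}(L,j,v)$ one has $|D(x,n)|\ge a(L,v-1)$, and that this lower bound is exactly what the hypothesis $v\ge 1$ provides (for $v=0$ the set $\mathcal{T}(L,j,0)$ carries only an upper bound on $|D|$). Since $\sum_{h\ge 1}2^{-h}=1$, if we had $T^{h}l(x)<2^{-h}a(L,v-1)$ for every $h\in\{1,\dots,n\}$, then $|D(x,n)|\le\sum_{h=1}^{n}T^{h}l(x)<a(L,v-1)$, a contradiction. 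Hence every $x\in\mathcal{T}(L,j,v)$ lies in some $U(L,j,v,h)$ with $1\le h\le n$, so that
\[
m(\mathcal{T}(L,j,v))\le\sum_{h=1}^{n}m(U(L,j,v,h)).
\]

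Next I would estimate each term by comparison with the sets of Lemma \ref{lem32}. For $x\in I(L,j)$ we have $x<x^{(j)}=\exp(-L+j)$, hence $T^{0}l(x)=l(x)=\log(1/x)\ge L-j$. Combined with the defining inequality $T^{h}l(x)\ge 2^{-h}a(L,v-1)$ of $U(L,j,v,h)$, this yields the inclusion
\[
U(L,j,v,h)\subseteq\mathcal{J}\bigl(0,h,L-j,2^{-h}a(L,v-1)\bigr).
\]
Applying Lemma \ref{lem32} with $d=0$, first threshold $L-j$ and second threshold $2^{-h}a(L,v-1)$, and using the identity $2^{(h-2)/2}\,2^{-h}=2^{-(h+2)/2}$ together with $2^{(0-2)/2}=\tfrac{1}{2}$ and $a(L,v-1)=\exp(-C_1L+v-1)$, I obtain
\[
m(U(L,j,v,h))\le 2\exp\bigl(-2^{-(h+2)/2}E\bigr),\qquad E:=\exp\Bigl(-C_1L+v-1+\tfrac{1}{2}(L-j)\Bigr),
\]
so that $E$ is precisely the inner exponential appearing in the statement.

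Finally I would sum over $h$. Since the factor $2^{-(h+2)/2}$ is decreasing in $h$, every summand is bounded by the one with $h=n$, giving
\[
m(\mathcal{T}(L,j,v))\le\sum_{h=1}^{n}2\exp\bigl(-2^{-(h+2)/2}E\bigr)\le 2n\exp\bigl(-2^{-(n+2)/2}E\bigr),
\]
and the claim follows with $C_2=2n$ and $C_3=2^{-(n+2)/2}$. The step I expect to require the most care is this last one: the per-term exponent $2^{-(h+2)/2}$ degrades as $h$ grows, so the argument produces an \emph{absolute} rate $C_3$ only because the number of summands is controlled by the fixed truncation parameter $n$ of Lemma \ref{lem210}, chosen once and for all in the proof of Theorem \ref{thm11}, rather than by a quantity growing with $L$. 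Thus the genuine content hidden inside the constants $C_2,C_3$ is the fact that $n$ may be kept bounded while all error terms in the decomposition of $g$ stay negligible; granting this, the estimate above is the desired bound.
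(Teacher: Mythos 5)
Your covering argument --- decomposing $\mathcal{T}(L,j,v)$ into the sets $U(L,j,v,h)$ via $\sum_h 2^{-h}=1$, embedding each $U(L,j,v,h)$ into $\mathcal{J}(0,h,L-j,2^{-h}a(L,v-1))$, and invoking Lemma \ref{lem32} --- is exactly the machinery that Definitions \ref{def31}, \ref{def34} and Lemma \ref{lem32} are set up for, and all of those steps check out, including the identification of the inner exponential $E$. (For what it is worth, the paper itself gives no argument here: it simply cites Lemma 2.15 of \cite{mr2}.) The problem is your last step. The per-term bound $2\exp(-2^{-(h+2)/2}E)$ \emph{degrades} as $h$ grows --- for $h\gtrsim 2\log_2 E$ it is already of size $2e^{-1}$ and carries no information --- so bounding the union by $n$ times the worst term gives $2n\exp(-2^{-(n+2)/2}E)$, and your ``constants'' $C_2=2n$, $C_3=2^{-(n+2)/2}$ depend on $n$ in a fatal way.

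Your escape hatch --- that $n$ ``may be kept bounded'' --- is false in this paper. Lemma \ref{lem38} explicitly requires $n\geq n_0(K)$ with $n_0(K)$ large enough that $g^{2(n-1)}\exp(C_1K/2)$ is beaten down to $\exp(-\exp(4K))$, i.e.\ $n_0(K)$ grows at least like $\exp(4K)$; and the downstream uses of Lemma \ref{lem35} (the summation over $v$ in Lemma \ref{lem36}, and ultimately the absolute constant $C$ of Theorem \ref{thm11}) need $C_2,C_3$ to be genuinely absolute. With $n\asymp\exp(4K)$ your bound $2n\exp(-2^{-(n+2)/2}E)$ is essentially $2n$, which is vacuous. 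A correct proof has to treat large $h$ by a different mechanism --- e.g.\ exploit that $T^h l(x)=\beta_{h-1}(x)\,l(\alpha_h(x))$ with $\beta_{h-1}(x)\ll g^{h-1}$ decaying geometrically (so that the event $T^hl(x)\geq 2^{-h}a(L,v-1)$ forces $l(\alpha_h(x))\gg (2g)^{-h}\cdot(\ldots)$, and $2g<1$... rather, $(2g)^{-h}$ must be compared carefully), or redistribute the thresholds so that $2^{(h-2)/2}\theta_h$ stays bounded below --- and in either case one must still explain why the union over unboundedly many $h$ has summable measure. That missing ingredient is the actual content of Lemma 2.15 of \cite{mr2}, and your proposal does not supply it.
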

\begin{proof}
This is lemma 2.15 of \cite{mr2}.
\end{proof}
\begin{definition}\label{def35} 
Let $L_0:=\lfloor K\rfloor+1$. For $j\in\mathbb{Z}$, $j\leq j_0$ we define:
$$\mathcal{E}_1(K,j,n):=\left\{x\in I(L_0,j)\::\: |D(x,n)|\geq \exp\left(-\frac{C_1}{2}\: K\right)\right\}$$
$$\mathcal{E}_2(K,j,n):=\left\{x\in I(L_0,j)\::\: |T^{n+1}\mathcal{W}(x)|\geq \exp\left(-\frac{C_1}{2}\: K\right)\right\}.$$
\end{definition}
\begin{lemma}\label{lem36}
For sufficiently large $K$ we have:
$$m(\mathcal{E}_1(K,j,n))\leq |I(L_0,j)|\exp(-K)\:.$$
\end{lemma}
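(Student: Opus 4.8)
The plan is to cover $\mathcal{E}_1(K,j,n)$ by the level sets $\mathcal{T}(L_0,j,v)$ of Definition \ref{def34} and then apply the measure estimate of Lemma \ref{lem35}, exploiting the double-exponential decay in $v$. First I would record that, by construction, the sets $\mathcal{T}(L_0,j,v)$, $v\geq 0$, partition $I(L_0,j)\cap X$ according to the size of $|D(x,n)|$, with $|D(x,n)|\leq a(L_0,v)=\exp(-C_1L_0+v)$ on $\mathcal{T}(L_0,j,v)$. Hence any $x$ with $|D(x,n)|\geq\exp(-\tfrac{C_1}{2}K)$ must lie in some $\mathcal{T}(L_0,j,v)$ for which $a(L_0,v)\geq\exp(-\tfrac{C_1}{2}K)$, that is, with $v\geq v_1:=\lceil C_1L_0-\tfrac{C_1}{2}K\rceil$. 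Since $L_0=\lfloor K\rfloor+1$ gives $L_0>K/2$, the base set $\mathcal{T}(L_0,j,0)$ (where $|D(x,n)|\leq\exp(-C_1L_0)$) is disjoint from $\mathcal{E}_1(K,j,n)$, so that
\[
\mathcal{E}_1(K,j,n)\subseteq\bigcup_{v\geq v_1}\mathcal{T}(L_0,j,v).
\]

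Next I would sum the bound of Lemma \ref{lem35} over $v\geq v_1$. Writing $w_v:=-C_1L_0+v-1+\tfrac12(L_0-j)$, the terms $\exp(-C_3\exp(w_v))$ decay doubly exponentially in $v$, so the series is dominated by its first term: since $w_v=w_{v_1}+(v-v_1)$ and $e^{s}-1\geq s$, one gets $\sum_{v\geq v_1}\exp(-C_3\exp(w_v))\leq(1-e^{-1})^{-1}\exp(-C_3\exp(w_{v_1}))$ as soon as $C_3\exp(w_{v_1})\geq 1$. This yields $m(\mathcal{E}_1(K,j,n))\leq C_2'\exp(-C_3\exp(w_{v_1}))$ with $C_2'=C_2/(1-e^{-1})$. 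The choice of $v_1$ forces $-C_1L_0+v_1\geq-\tfrac{C_1}{2}K$, and since $j\leq j_0=L_0-\lfloor L_0/100\rfloor$ gives $L_0-j\geq\lfloor L_0/100\rfloor$, combining these with $L_0\geq K$ and $C_1=1/400$ produces the clean lower bound $w_{v_1}\geq\tfrac{3K}{800}-1$, which grows linearly in $K$.

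The final step, and the one demanding care, is the comparison with the target $|I(L_0,j)|\exp(-K)$, where $|I(L_0,j)|=(1-e^{-1})\exp(-L_0+j)$. Taking logarithms, the desired inequality $C_2'\exp(-C_3\exp(w_{v_1}))\leq|I(L_0,j)|\exp(-K)$ reduces to showing that $C_3\exp(w_{v_1})$ dominates the linear quantity $L_0-j+K+O(1)$. Using $\exp(w_{v_1})\geq\exp(-\tfrac{C_1}{2}K-1)\exp(\tfrac12(L_0-j))$, the left side is exponential in $L_0-j$ while the right side is only linear in $L_0-j$ and $K$; at the extreme $L_0-j=\lfloor L_0/100\rfloor\approx K/100$ this already reads $\exp(\tfrac{3K}{800})\gtrsim K$, and for larger $L_0-j$ the exponential growth only widens the gap (the bounded density of $m$ against Lebesgue measure on these small intervals is swamped by this margin). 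The delicate part is therefore making the comparison uniform in $j$ across the entire range $j\leq j_0$ as $K\to\infty$; once the estimate is secured at the smallest admissible value of $L_0-j$, monotonicity in $L_0-j$ disposes of all remaining $j$, and the claimed bound follows for sufficiently large $K$.
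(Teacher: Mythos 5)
Your proof is correct and follows essentially the same route as the paper: cover $\mathcal{E}_1(K,j,n)$ by the sets $\mathcal{T}(L_0,j,v)$ with $v$ above a threshold of order $C_1K$, apply Lemma \ref{lem35}, and sum the doubly exponentially decaying series. In fact you supply more detail than the paper, whose proof asserts the final comparison with $|I(L_0,j)|\exp(-K)$ without the explicit bookkeeping (uniformity in $j$, dominance of the first term) that you carry out.
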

\begin{proof}
We have
$$|D(x,n)|\geq \exp\left(-\frac{C_1}{2}K\right)$$
Therefore
$$x\in\bigcup_{v\geq \frac{C_1}{3}K}\mathcal{T}(L_0,j,v)$$
and thus by Lemma \ref{lem35} we have
$$m(\mathcal{E}_1(K,j,n))\leq C_2\sum_{v\geq \frac{C_1}{3}K}\exp\left(-C_3\exp\left(-C_1L_0+v-1+\frac{1}{2}(L-j)\right)\right)\leq |I_0(L_0,j)|\exp(-K).$$
\end{proof}

\begin{definition}\label{def37}
For $w\in\mathbb{N}_0$ we set 
$$\mathcal{V}(K,j,w,n):=\{x\in I(L_0,j)\::\: l(x)\exp\left(-\frac{C_1}{2}K+w\right)\leq |T^{(n+1)}\mathcal{W}(x)|\leq l(x)\exp\left(-\frac{C_1}{2}K+w+1\right)$$
$$\mathcal{Z}(K,j,v,w,n):=\mathcal{T}(L_0,j,v)\cap \mathcal{V}(K,j,w,n)\:.$$
\end{definition}
\begin{lemma}\label{lem38}
Here and in the sequel we assume that $n\geq n_0(K)$, where $n_0(K)$
is chosen sufficiently large. It holds
$$m(\mathcal{V}(K,j,w,n))\leq \exp(-2w)(L_0-j-H+w)^{-1}\exp(-\exp(4K))\:,$$
where 
$$H:=\sup_{x\in(0,1)} |H(x)|.$$
\end{lemma}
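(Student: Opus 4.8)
The plan is to bound $m(\mathcal{V}(K,j,w,n))$ by a Markov (Chebyshev) inequality of a fixed moment order combined with the contraction estimate of Lemma \ref{lem26}(ii), and then to choose $n_0(K)$ large enough that the resulting factor $g^{np}$ produces the double-exponential decay $\exp(-\exp(4K))$. Fix an exponent $p$ with $p>2$; taking $p=3$ will suffice. For $x\in\mathcal{V}(K,j,w,n)$ we have by Definition \ref{def37} the lower bound $|T^{n+1}\mathcal{W}(x)|\geq l(x)\exp(-\frac{C_1}{2}K+w)$, and since $x\in I(L_0,j)=(\exp(-L_0+j-1),\exp(-L_0+j))$ we have $l(x)=\log(1/x)>L_0-j$. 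Hence on $\mathcal{V}$ the quotient $|T^{n+1}\mathcal{W}(x)|/(l(x)\exp(-\frac{C_1}{2}K+w))$ is $\geq 1$, and raising it to the $p$-th power and integrating gives
$$m(\mathcal{V}(K,j,w,n))\leq\exp\left(\left(\tfrac{C_1}{2}K-w\right)p\right)(L_0-j)^{-p}\int_0^1|T^{n+1}\mathcal{W}(x)|^p\,dm(x),$$
where I have used $l(x)^{-p}<(L_0-j)^{-p}$ on $I(L_0,j)$ and then extended the integral to all of $(0,1)$.

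Next I would apply Lemma \ref{lem26}(ii) with $f=\mathcal{W}$ and $n$ replaced by $n+1$, which yields $\int_0^1|T^{n+1}\mathcal{W}|^p\,dm\leq g^{np}M_p$, where $M_p:=\int_0^1|\mathcal{W}(x)|^p\,dm(x)$. To see that $M_p$ is a finite constant depending only on $p$, recall from (2.2) and (2.3) in the proof of Lemma \ref{lem210} that $g(x)=-2\Phi_1(x)=\mathcal{W}(x)+H(x)$, so $|\mathcal{W}(x)|\leq|g(x)|+H$ with $H=\sup_{x\in(0,1)}|H(x)|$; together with the moment asymptotics (1.1) this shows $M_p<\infty$ for every fixed $p$. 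Combining the two displays gives
$$m(\mathcal{V}(K,j,w,n))\leq\exp\left(\left(\tfrac{C_1}{2}K-w\right)p\right)(L_0-j)^{-p}\,g^{np}\,M_p.$$

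It remains to compare this with the target bound. Dividing the right-hand side by $\exp(-2w)(L_0-j-H+w)^{-1}$ isolates the factor
$$S:=\exp(-(p-2)w)\,(L_0-j-H+w)\,(L_0-j)^{-p},$$
and an elementary one-variable maximization (using $p>2$ and $L_0-j\geq\lfloor L_0/100\rfloor$, which is large for large $K$) shows that the supremum of $\exp(-(p-2)w)(L_0-j-H+w)$ over $w\geq 0$ is attained at $w=0$ and equals $L_0-j-H$; hence $S\leq(L_0-j)^{-(p-1)}\leq 1$ uniformly in $w$ and $j$. Consequently the desired inequality reduces to $\exp(\frac{C_1}{2}Kp)\,M_p\,g^{np}\leq\exp(-\exp(4K))$, that is, to $np|\log g|\geq\exp(4K)+\frac{C_1}{2}Kp+\log M_p$, which holds as soon as $n\geq n_0(K)$ with $n_0(K)$ chosen of size $\exp(4K)/(p|\log g|)$. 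Since this lower bound on $n$ is independent of $w$ and $j$, the choice of $n_0(K)$ is uniform and the lemma follows. The main obstacle is precisely the bookkeeping in this last step: one must verify that the extra decay $\exp(-(p-2)w)$, available exactly because $p>2$, dominates the slowly growing factor $L_0-j-H+w$ uniformly, so that $S$ stays bounded and a single choice of $n_0(K)$ works simultaneously for all admissible $w$ and $j$.
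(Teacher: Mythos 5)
Your proof is correct and follows the same strategy as the paper's: a Markov/Chebyshev bound using the lower bound for $|T^{n+1}\mathcal{W}|$ on $\mathcal{V}(K,j,w,n)$, the contraction estimate of Lemma \ref{lem26}(ii), and a choice of $n_0(K)$ of size roughly $\exp(4K)$. The one substantive difference is the moment order. The paper uses the second moment and opens with
$m(\mathcal{V}(K,j,w,n))(L_0-j-H+w)^2\exp(-C_1K+2w)\le\int_{\mathcal{V}}|T^{n+1}\mathcal{W}(x)|^2\,dm(x)$,
which implicitly requires $l(x)\ge L_0-j-H+w$ on $\mathcal{V}\subseteq I(L_0,j)$; since $l(x)<L_0-j+1$ there, that inequality fails once $w>H+1$, and with $p=2$ no spare decay in $w$ remains to absorb the linearly growing factor $L_0-j-H+w$ in the stated bound. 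Your choice $p=3$ supplies exactly the extra factor $\exp(-(p-2)w)$ needed to dominate that factor uniformly in $w$ and $j$ (your one-variable maximization of $\exp(-w)(L_0-j-H+w)$ is the right check), so your version is the more careful one and actually patches this uniformity issue. You also make explicit why $\int_0^1|\mathcal{W}|^p\,dm$ is finite, via $\mathcal{W}=g-H$ and the known moment bounds for $g$, a constant the paper simply takes for granted. In short: same route, but with a refinement that makes the estimate genuinely uniform in $w$.
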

\begin{proof}
By Lemma \ref{lem26} we have
\begin{align*}
m(\mathcal{V}(K,j,w,n))(L_0-j-H+w)^2\exp(-C_1K+2w)&\leq \int_{\mathcal{V}(K,j,w,n)}|T^{n+1}\mathcal{W}(x)|^2dm(x)\\
&\leq g^{2(n-1)}\int_0^1|\mathcal{W}(x)|^2dm(x).
\end{align*}
Thus 
$$m(\mathcal{V}(K,j,w,n))\leq g^{2(n-1)}\left(\int_0^1\mathcal{W}(x)^2dm(x)\right)\exp\left(\frac{C_1}{2}K-2w\right)(L_0-j-H+w)^{-2}.$$
The result of Lemma \ref{lem38} follows by choosing $n$ sufficiently large.
\end{proof}
\begin{lemma}\label{lem310}
We have for $n\geq n_0(K)$:
$$\int_{\mathcal{E}_1(K,j,n)}|g(x)|^Kdx\leq |I(K,j)|\exp(-K).$$
\end{lemma}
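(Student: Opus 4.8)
The plan is to bound the integral $\int_{\mathcal E_1(K,j,n)}|g(x)|^K\,dx$ by splitting it into a pointwise ppper bound on $|g(x)|$ on the interval $I(L_0,j)=I(K,j)$ together with the measure bound on $\mathcal E_1$ coming from Lemma~\ref{lem36}. The starting point is the decomposition from Lemma~\ref{lem210},
\[
g(x)=l(x)+D(x,n)+H(x)+(-1)^{n+1}T^{n+1}\mathcal W(x).
\]
On the interval $I(L_0,j)=(x^{(j-1)},x^{(j)})$ with $x^{(j)}=\exp(-L_0+j)$, the principal term $l(x)=\log(1/x)$ satisfies $l(x)\leq L_0-j+1$, so $l(x)$ is of size $O(L_0-j)=O(K)$ on this interval. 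The remaining terms $D(x,n)$, $H(x)$ and $T^{n+1}\mathcal W(x)$ are all bounded: $H(x)$ by the constant $H$ from Lemma~\ref{lem38}, and the other two by absolute constants once $n$ is large (for $D(x,n)$ by the definition of the sets $\mathcal T$, and for $T^{n+1}\mathcal W$ via the contraction in Lemma~\ref{lem26}). Hence there is a uniform bound $|g(x)|\leq C(L_0-j)$ on $I(K,j)$ for a suitable constant.

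First I would make this pointwise bound explicit, obtaining $|g(x)|\leq C\,(L_0-j+1)$ on $I(K,j)$, so that
\[
|g(x)|^K\leq \bigl(C(L_0-j+1)\bigr)^K
\]
throughout $\mathcal E_1(K,j,n)\subset I(K,j)$. Next I would convert between Lebesgue measure and the measure $m$: since $dm(x)=\frac{1}{\log 2}\frac{dx}{1+x}$, on $(0,1)$ the two measures are comparable, $dx\leq 2\log 2\,dm(x)$, so an $m$-measure bound yields a Lebesgue bound up to an absolute constant. Then I would invoke Lemma~\ref{lem36}, which gives $m(\mathcal E_1(K,j,n))\leq |I(L_0,j)|\exp(-K)$, to obtain
\[
\int_{\mathcal E_1(K,j,n)}|g(x)|^K\,dx\leq \bigl(C(L_0-j+1)\bigr)^K\cdot C'|I(K,j)|\exp(-K).
\]

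The remaining point is to absorb the polynomial-in-$K$ factor $\bigl(C(L_0-j+1)\bigr)^K$ against the decay $\exp(-K)$ so that the claimed bound $|I(K,j)|\exp(-K)$ survives. Here I would use that the measure bound in Lemma~\ref{lem36} in fact contains considerable room: tracing through its proof, the sum over $v\geq \tfrac{C_1}{3}K$ of the doubly-exponentially small terms from Lemma~\ref{lem35} decays far faster than any fixed power of $\exp(-K)$, so one can extract an extra factor $\exp(-C''K)$ (or even a doubly-exponential gain) that dominates $\bigl(C(L_0-j+1)\bigr)^K=\exp\bigl(K\log(C(L_0-j+1))\bigr)$. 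The point is that $\log(L_0-j+1)=O(\log K)$, so the factor $\bigl(C(L_0-j+1)\bigr)^K$ only costs $\exp(O(K\log K))$, which is swallowed by the super-exponential decay available from the $\mathcal T$-set estimates. After this absorption the stated inequality follows.

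The main obstacle I anticipate is precisely this bookkeeping of the competing exponential rates: one must verify that the measure estimate for $\mathcal E_1$ genuinely carries enough decay to defeat the $\exp(O(K\log K))$ growth coming from raising $|g(x)|\approx L_0-j$ to the $K$-th power, rather than merely the bare $\exp(-K)$ recorded in Lemma~\ref{lem36}. In practice this means re-examining the summation in the proof of Lemma~\ref{lem36} (the geometric-times-doubly-exponential sum over $v$) to confirm that the retained decay margin is ample, and checking uniformity in $j$ over the range $j\leq j_0=L_0-\lfloor L_0/100\rfloor$ so that the pointwise bound $l(x)=O(K)$ holds on every relevant interval. I expect no genuine difficulty beyond this careful tracking of constants and exponents.
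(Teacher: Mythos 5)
There is a genuine gap: your argument rests on a uniform pointwise bound $|g(x)|\le C(L_0-j+1)$ on $I(K,j)$, obtained by claiming that $D(x,n)$ and $T^{n+1}\mathcal W(x)$ are bounded by absolute constants there. Neither claim is justified. The set $\mathcal E_1(K,j,n)$ is by Definition~\ref{def35} precisely the set where $|D(x,n)|\ge\exp(-C_1K/2)$; this is a \emph{lower} bound, and $D(x,n)=\sum_{v=1}^n(-1)^vT^vl(x)$ has no uniform upper bound on $I(K,j)$ (the strata $\mathcal T(L,j,v)$ of Definition~\ref{def34} are nonempty for arbitrarily large $v$, where $|D|$ is as large as $a(L,v)=\exp(-C_1L+v)\to\infty$). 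Likewise Lemma~\ref{lem26} is an $L^p$ contraction, not an $L^\infty$ bound, so it gives no pointwise control of $T^{n+1}\mathcal W(x)$. Consequently the factor $\bigl(C(L_0-j+1)\bigr)^K$ you want to pull out of the integral is not available, and the ``sup times measure'' scheme collapses at its first step. The paper's proof avoids this by stratifying: $\mathcal E_1(K,j,n)\subseteq\bigcup_{v\ge1}\mathcal T(K,j,v)$, on each stratum the defining inequality $|D(x,n)|\le a(K,v)$ supplies a genuine upper bound for the integrand, and the doubly exponential measure decay of Lemma~\ref{lem35} makes the resulting sum over $v$ converge to the stated bound. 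To repair your argument you would at minimum have to carry out this stratification, which is exactly the paper's route; Lemma~\ref{lem36} alone does not suffice.

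A second, smaller, problem is the uniformity in $j$: you assert $\log(L_0-j+1)=O(\log K)$ and that $l(x)=O(K)$ on every relevant interval, but $j$ ranges over all integers $j\le j_0$, so $L_0-j\to\infty$ and $l(x)\asymp L_0-j$ is unbounded. The factor $\bigl(C(L_0-j+1)\bigr)^K$ is therefore not $\exp(O(K\log K))$ uniformly in $j$; absorbing it requires exploiting the $j$-dependence (the term $\tfrac{1}{2}(L-j)$) inside the double exponential of Lemma~\ref{lem35}, not merely its $K$-dependence, and this is precisely the bookkeeping your sketch defers without resolving.
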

\begin{proof}
We have 
$$\mathcal{E}_1(K,j,n)\subseteq \bigcup_{v\geq1}\mathcal{T}(K,j,v)$$
and therefore by Definition \ref{def34}:
\begin{align*}
\int_{\mathcal{E}_1(K,j,n)}|g(x)|^Kdx&\leq \sum_{v\geq 1} \int_{\mathcal{T}(K,j,v)}|g(x)|^Kdx\leq \sum_{v\geq 1}m(\mathcal{T}(K,j,v))a(K,v)^K\\
&\leq |I(K,j)|\exp(-K)
\end{align*}
by Lemma \ref{lem35}.
\end{proof}
\begin{lemma}\label{lem39}
$$m(\mathcal{E}_2(K,j,n))\leq \exp(-\exp(3K))\:.$$
\end{lemma}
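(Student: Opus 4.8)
The plan is to treat $\mathcal{E}_2(K,j,n)$ as the super-level set of the function $|T^{n+1}\mathcal{W}|$ and to control its $m$-measure by combining Chebyshev's inequality with the contraction estimate of Lemma \ref{lem26}(ii), exactly as in the proof of Lemma \ref{lem38} but without the finer decomposition into the sets $\mathcal{V}$. The point is that since $T$ contracts in $L^2(dm)$ by a factor $g<1$ per iteration, the $L^2$-norm of $T^{n+1}\mathcal{W}$ is exponentially small in $n$, and this decay is strong enough to dominate the modest growth coming from the threshold $\exp(-\frac{C_1}{2}K)$ once $n$ is large compared to $K$.

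Concretely, first I would apply Chebyshev's inequality with respect to the measure $m$ to the threshold $\exp(-\frac{C_1}{2}K)$ appearing in Definition \ref{def35}, giving
\[
m(\mathcal{E}_2(K,j,n))\leq \exp(C_1K)\int_0^1|T^{n+1}\mathcal{W}(x)|^2\,dm(x).
\]
Next I would invoke Lemma \ref{lem26}(ii) with $p=2$ and $f=\mathcal{W}$ to obtain
\[
\int_0^1|T^{n+1}\mathcal{W}(x)|^2\,dm(x)\leq g^{2n}\int_0^1|\mathcal{W}(x)|^2\,dm(x),
\]
and combining the two displays yields
\[
m(\mathcal{E}_2(K,j,n))\leq g^{2n}\exp(C_1K)\int_0^1|\mathcal{W}(x)|^2\,dm(x).
\]
Here I use that $\int_0^1|\mathcal{W}(x)|^2\,dm(x)$ is a finite absolute constant, which is the same fact already used in the proof of Lemma \ref{lem38}.

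Finally, since $g<1$ the factor $g^{2n}=\exp(-2n|\log g|)$ decays exponentially in $n$, while the remaining factors are at most $\exp(C_1K)$ times a constant. Taking logarithms, the claimed bound $m(\mathcal{E}_2(K,j,n))\leq \exp(-\exp(3K))$ will hold as soon as $2n|\log g|\geq \exp(3K)+C_1K+O(1)$, so it suffices to enlarge $n_0(K)$ so that $n\geq n_0(K)$ forces $n$ to exceed roughly $|\log g|^{-1}\exp(3K)$; this is admissible because $n$ is allowed to depend on $K$. I expect the only points requiring care to be this calibration of $n_0(K)$ against the doubly exponential target $\exp(-\exp(3K))$, together with recording that the $L^2(dm)$-norm of Wilton's function is finite; there is no serious obstacle beyond these bookkeeping steps.
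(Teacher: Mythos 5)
Your argument is correct, and it rests on the same ingredients the paper uses: Chebyshev's inequality in $L^2(dm)$, the contraction estimate of Lemma \ref{lem26}(ii) (applied to $T^{n+1}$, giving the factor $g^{2n}$), the finiteness of $\int_0^1|\mathcal{W}(x)|^2\,dm(x)$, and the freedom to take $n_0(K)$ as large as needed --- here of size roughly $|\log g|^{-1}\exp(3K)$, which is admissible since the paper explicitly allows $n_0(K)$ to depend on $K$ and already needs a comparable size to reach $\exp(-\exp(4K))$ in Lemma \ref{lem38}. The route is nonetheless slightly different from the paper's: the paper's one-line proof deduces the bound from Lemma \ref{lem38}, i.e.\ it covers $\mathcal{E}_2(K,j,n)$ by the sets $\mathcal{V}(K,j,w,n)$ of Definition \ref{def37} and sums the bounds $\exp(-2w)(L_0-j-H+w)^{-1}\exp(-\exp(4K))$ over $w$, whereas you apply Chebyshev directly to the super-level set $\mathcal{E}_2(K,j,n)$ at the single threshold $\exp(-\frac{C_1}{2}K)$ with no $w$-decomposition. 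Your version is more self-contained and sidesteps the need to check that the sets $\mathcal{V}(K,j,w,n)$, whose thresholds carry the extra factor $l(x)$, actually cover $\mathcal{E}_2(K,j,n)$; the paper's version buys the reuse of an already-proved lemma. Either way the conclusion $m(\mathcal{E}_2(K,j,n))\leq\exp(-\exp(3K))$ follows.
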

\begin{proof}
This follows from Definition \ref{def35} and Lemma \ref{lem38}.
\end{proof}
\begin{lemma}\label{lem311}
We have for $n\geq n_0(K)$:
$$\int_{\mathcal{E}_2(K,j,n)}|g(x)|^Kdx\leq (|j|+1)^{-2}\exp(-K)$$
\end{lemma}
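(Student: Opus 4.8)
The plan is to combine the decomposition of $g$ from Lemma~\ref{lem210}, in the form $|g(x)|\le l(x)+|D(x,n)|+H+|T^{n+1}\mathcal{W}(x)|$ with $H=\sup_{x}|H(x)|$, with the two elementary facts that on $I(L_0,j)$ one has $L_0-j<l(x)<L_0-j+1$ and $L_0-j\ge\lfloor L_0/100\rfloor$. Thus $l(x)$ dominates the bounded term $H$, and on $\mathcal{E}_1(K,j,n)^c$ it also dominates $|D(x,n)|<\exp(-\frac{C_1}{2}K)$. First I would split $\mathcal{E}_2=(\mathcal{E}_2\cap\mathcal{E}_1)\cup(\mathcal{E}_2\setminus\mathcal{E}_1)$. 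On $\mathcal{E}_2\cap\mathcal{E}_1\subseteq\mathcal{E}_1$ the integral is already controlled by Lemma~\ref{lem310}, giving $\int_{\mathcal{E}_1}|g|^K\,dx\le|I(L_0,j)|\exp(-K)$; since $|I(L_0,j)|=(1-e^{-1})\exp(j-L_0)$, an elementary estimate shows $|I(L_0,j)|\le\frac12(|j|+1)^{-2}$ for all $j\le j_0$ once $K$ is large (exponential decay in $j-L_0$ beats the polynomial $(|j|+1)^{-2}$), so this piece contributes at most $\frac12(|j|+1)^{-2}\exp(-K)$.

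On $\mathcal{E}_2\setminus\mathcal{E}_1$ we have $|g(x)|\le 2l(x)+|T^{n+1}\mathcal{W}(x)|$, and I would split it according to the size of $|T^{n+1}\mathcal{W}(x)|$ relative to $l(x)$. On the \emph{low} part, where $|T^{n+1}\mathcal{W}(x)|<l(x)\exp(-\frac{C_1}{2}K)$, one has $|g(x)|\le 3l(x)\le 3(L_0-j+1)$, so this contributes at most $\big(3(L_0-j+1)\big)^K$ times $m(\mathcal{E}_2\cap I(L_0,j))$. Using $m(\mathcal{E}_2)\le\exp(-\exp(3K))$ from Lemma~\ref{lem39} together with the trivial bound $m(\mathcal{E}_2\cap I(L_0,j))\le c\exp(j-L_0)$, the polynomial factor $(L_0-j+1)^K$ is absorbed either by the super-exponential factor (for moderate $j$) or by the exponentially small interval measure (for very negative $j$), leaving at most $\frac14(|j|+1)^{-2}\exp(-K)$.

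The main part is the \emph{high} part $\bigcup_{w\ge0}\mathcal{V}(K,j,w,n)$, where $|T^{n+1}\mathcal{W}(x)|\le l(x)\exp(-\frac{C_1}{2}K+w+1)$ and hence $|g(x)|\le M_w:=C(L_0-j+1)\max\{1,\exp(-\frac{C_1}{2}K+w)\}$. For the measure I would re-run the argument of Lemma~\ref{lem38} with exponent $p:=K+2$ in place of $2$; this is legitimate because Lemma~\ref{lem26}(ii) holds for every $p>1$ and because $\mathcal{W}\in L^p(dm)$. Indeed the partial sums $\mathcal{L}(\cdot,n)=\sum_{v=0}^n(-1)^vT^vl$ are Cauchy in $L^p$, since $\int_0^1|T^vl|^p\,dm\le g^{(v-1)p}\int_0^1|l|^p\,dm$ and $\int_0^1|l|^p\,dm<\infty$, and they converge almost everywhere to $\mathcal{W}$. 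Using $|T^{n+1}\mathcal{W}(x)|\ge(L_0-j)\exp(-\frac{C_1}{2}K+w)$ on $\mathcal{V}(K,j,w,n)$ and $\int_0^1|T^{n+1}\mathcal{W}|^p\,dm\le g^{np}\int_0^1|\mathcal{W}|^p\,dm$, one obtains, after enlarging $n_0(K)$ so that $g^{np}\big(\int_0^1|\mathcal{W}|^p\,dm\big)\exp(\frac{C_1}{2}Kp)\le\exp(-\exp(4K))$, the bound
\[
m(\mathcal{V}(K,j,w,n))\le\exp(-\exp(4K))\,(L_0-j)^{-p}\exp(-pw).
\]
Summing $m(\mathcal{V}(K,j,w,n))M_w^K$ over $w\ge0$ then yields, since $p-K=2$, a geometric series in $w$ and the bound $\exp(-\exp(4K))(L_0-j)^{K-p}=\exp(-\exp(4K))(L_0-j)^{-2}$. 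Finally $(L_0-j)^{-2}\le C(|j|+1)^{-2}$ for all $j\le j_0$, because $L_0-j\ge\frac1{100}(|j|+1)$ when $0\le j\le j_0$ and $L_0-j\ge|j|+1$ when $j<0$, and the factor $\exp(-\exp(4K))$ swallows $C$ and $\exp(-K)$; so the high part is at most $\frac14(|j|+1)^{-2}\exp(-K)$, and adding the three contributions proves the lemma.

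The step I expect to be the real obstacle is controlling the high part for large $w$: with only the $L^2$ estimate of Lemma~\ref{lem38}, the pointwise bound $M_w^K\sim\exp(wK)$ is not beaten by the decay $\exp(-2w)$ once $K>2$, and the $w$-sum diverges. Passing to the moment $p=K+2>K$ is precisely what restores convergence, and the two ingredients that make this affordable are the $p$-independence of the contraction in Lemma~\ref{lem26}(ii) and the membership $\mathcal{W}\in L^p(dm)$ for all $p$; the freedom to choose $n\ge n_0(K)$ as large as needed then produces the decisive super-exponential factor $\exp(-\exp(4K))$ that dominates all polynomial-in-$(L_0-j)$ growth.
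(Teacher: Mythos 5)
Your proof is correct, and it departs from the paper's argument at the decisive point. The paper covers $\mathcal{E}_2(K,j,n)$ by the sets $\mathcal{Z}(K,j,v,w,n)=\mathcal{T}(L_0,j,v)\cap\mathcal{V}(K,j,w,n)$, bounds $|g|$ pointwise on each piece, and invokes the $L^2$-based measure estimate of Lemma \ref{lem38} (decay $\exp(-2w)$) together with Lemma \ref{lem35}, concluding with ``summation over $v$ and $w$''. As you observe, on $\mathcal{V}(K,j,w,n)$ the term $|T^{n+1}\mathcal{W}(x)|$ may be as large as $l(x)\exp(-\frac{C_1}{2}K+w+1)$, so $|g(x)|^K$ grows like $\exp(wK)$ in $w$, and the factor $\exp(-2w)$ supplied by Lemma \ref{lem38} does not make the $w$-sum converge once $K>2$; this is exactly the point that the paper's one-line summation glosses over. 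Your substitute --- rerunning the Chebyshev/contraction argument of Lemma \ref{lem38} with exponent $p=K+2$, which is legitimate because Lemma \ref{lem26}(ii) holds for every $p>1$ and because $\mathcal{W}\in L^p(dm)$ for every $p$ (your Cauchy-sequence argument for the partial sums $\mathcal{L}(\cdot,n)$, combined with their almost everywhere convergence to $\mathcal{W}$, is the right justification) --- produces the decay $\exp(-(K+2)w)$ that actually dominates $\exp(wK)$, at the harmless cost of enlarging $n_0(K)$. Your preliminary splitting (reusing Lemma \ref{lem310} on $\mathcal{E}_2\cap\mathcal{E}_1$, and treating the low range $|T^{n+1}\mathcal{W}(x)|<l(x)\exp(-\frac{C_1}{2}K)$ via Lemma \ref{lem39} together with the trivial bound $m(\mathcal{E}_2\cap I(L_0,j))\le c\exp(j-L_0)$) also collapses the paper's double sum over $(v,w)$ into a single sum over $w$. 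In short: the overall strategy is the same (level sets of $T^{n+1}\mathcal{W}$ plus a Chebyshev bound through the contraction $T$), but your choice of the moment $p=K+2$ is a genuine and, as far as I can see, necessary strengthening of the measure estimate, and your version is the more complete of the two.
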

\begin{proof}
For $x\in \mathcal{Z}(K,j,v,w,n)$ we have:
$$|g(x)|\leq b(x,K,j,n)+w+1+|D(x,n)|,$$
where $b(x,K,j,n):=l(x)+L_0-j+w+1$. Thus
\begin{align*}
\int_{\mathcal{Z}(K,j,v,w,n)}|g(x)|^Kdx\leq &2^K\left(\sup_{x\in I(K,j)}|b(x,K,j,n)|^K
+|I(L_0,j)|l(x^{(j-1)})^K\exp\left(-\frac{C_1}{2}K^2+(w+1)K\right)\right)\\
&\times(m(\mathcal{T}(K,j,v)+m(\mathcal{V}(K,j,w,v,n)).
\end{align*}
The result follows by summation over $v$ and $w$.
\end{proof}
\begin{definition}\label{def312}
We set
$$x_0:=\exp\left(-\left\lfloor \frac{L_0}{100}\right\rfloor\right)\:.$$
\end{definition}
\begin{lemma}\label{lem313}
There is a contant $C_4>0$, such that
$$\int_{x_0}^{1/2}|g(x)|^Kdx\leq \Gamma(K+1)\exp(-C_4K)\:.$$
\end{lemma}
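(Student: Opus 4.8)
The plan is to exploit that on $(x_0,1/2)$ the main term $l(x)=\log(1/x)$ in the decomposition of Lemma \ref{lem210} satisfies $l(x)\le \log(1/x_0)=\lfloor L_0/100\rfloor$, which is only of size $K/100$; since $100>e$, the resulting pointwise bound of order $(K/100+O(1))^K$ for the integrand will be exponentially smaller than $\Gamma(K+1)\sim\sqrt{2\pi K}\,(K/e)^K$. First I would cover the interval by the pieces $I(L_0,j)$. Because $x_0=x^{(j_0)}$ and $x^{(L_0-1)}=e^{-1}<1/2<1=x^{(L_0)}$, the relevant indices are $j_0<j\le L_0$, the topmost interval $I(L_0,L_0)$ entering only through its part $(e^{-1},1/2)$. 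On each $I(L_0,j)$ I would split the domain into the exceptional sets $\mathcal{E}_1(K,j,n)\cup\mathcal{E}_2(K,j,n)$ and their complement.

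On the complement both $|D(x,n)|$ and $|T^{n+1}\mathcal{W}(x)|$ are smaller than $\exp(-\tfrac{C_1}{2}K)$, so Lemma \ref{lem210} together with the boundedness of $H$ gives
\[
|g(x)|\le l(x)+H+2\exp\!\left(-\tfrac{C_1}{2}K\right)\le \frac{K}{100}+C_5,\qquad C_5:=\tfrac{1}{100}+H+2,
\]
uniformly on $(x_0,1/2)$, using $\lfloor L_0/100\rfloor\le (K+1)/100$. Integrating the $K$-th power over a set of measure at most $1/2$ gives a bound of order $(K/100+C_5)^K$, and Stirling's formula, in the form $\Gamma(K+1)\ge (K/e)^K$ for large $K$, turns this into $(K/100+C_5)^K\le \Gamma(K+1)\,(e/100+o(1))^K\le \Gamma(K+1)\exp(-C_4'K)$ for a suitable $C_4'>0$, since $e/100<1$.

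For the exceptional part I would simply invoke Lemmas \ref{lem310} and \ref{lem311}, which already bound $\int_{\mathcal{E}_1(K,j,n)}|g|^K\,dx$ by $|I(L_0,j)|\exp(-K)$ and $\int_{\mathcal{E}_2(K,j,n)}|g|^K\,dx$ by $(|j|+1)^{-2}\exp(-K)$. Summing over $j_0<j\le L_0$ and using both $\sum_j|I(L_0,j)|\le 1$ and the convergence of $\sum_j(|j|+1)^{-2}$ yields a total exceptional contribution of order $\exp(-K)$. Since $\Gamma(K+1)$ eventually exceeds any fixed exponential, this is dominated by $\Gamma(K+1)\exp(-C_4K)$ for any $C_4<1$ once $K$ is large; choosing $C_4:=\min(C_4',1/2)$ then yields the stated estimate.

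I expect the main difficulty to lie not in any single estimate but in the bookkeeping: one must confirm that the exceptional-set bounds of Lemmas \ref{lem310} and \ref{lem311}, and the measure estimates of Lemmas \ref{lem35} and \ref{lem38} on which they rest, remain valid over the range $j_0<j\le L_0$ relevant here (where $L_0-j$ is small) rather than only for $j\le j_0$. The quantitative heart of the matter—that the factor $1/100$ hidden in the definition of $x_0$ forces $100>e$ and hence an exponential gain over $\Gamma(K+1)$—is then elementary.
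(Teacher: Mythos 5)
Your main-term estimate is correct and does isolate the reason for the constant $100$ in Definition \ref{def312}: off the exceptional sets one has $|g(x)|\le l(x)+H+o(1)\le K/100+O(1)$ on $(x_0,1/2)$, and $(K/100+O(1))^K\le\Gamma(K+1)\exp(-C_4'K)$ because $100>e$. But the treatment of the exceptional set is a genuine gap, and it is not mere bookkeeping. The sets $\mathcal{E}_1(K,j,n)$, $\mathcal{E}_2(K,j,n)$ are defined (Definition \ref{def35}) only for $j\le j_0$, i.e.\ only inside $(0,x_0)$, and the measure bound they rest on (Lemma \ref{lem35}) has inner exponent $-C_1L+v-1+\tfrac12(L-j)$; this is usable precisely because $j\le j_0$ forces $\tfrac12(L-j)\ge L/200=2C_1L$, which dominates the $-C_1L$ term. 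For $j_0<j\le L_0$ the term $\tfrac12(L_0-j)$ can be $O(1)$, the double exponential degenerates to a trivial bound for all $v\lesssim C_1K$, and the proofs of Lemmas \ref{lem36}, \ref{lem310}, \ref{lem311} do not go through. So you cannot ``simply invoke'' those lemmas on $(x_0,1/2)$.

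Nor can the missing piece be patched by a crude tail bound. On $(x_0,1/2)$ the set where $|D(x,n)|\ge u$ has measure roughly $e^{-cu}$ (single, not double, exponential decay), and on it $|g|$ can be of size $u$; the resulting contribution is of order $\sum_u u^K e^{-cu}\asymp c^{-K}\Gamma(K+1)$, which is exponentially smaller than $\Gamma(K+1)$ only if $c>1$. A generic estimate such as Lemma \ref{lem32} with $d=0$, $h=1$ gives only $c=2^{-1/2}<1$ and would yield a bound \emph{larger} than $\Gamma(K+1)$. One genuinely needs the gain coming from $x\le 1/2$ (so that $\beta_0(x)\le 1/2$ in $T^vl$), and that sharpened moment estimate is exactly the content of Lemma 2.22 of \cite{mr2}, which is what the paper's own proof invokes: it bounds $\int_{x_0}^{1/2}|\mathcal{L}(x,n)|^{L_0}dx\le\Gamma(L_0+1)\exp(-C_5^*L_0)$, adds back $H$ and $T^{n+1}\mathcal{W}$ by Minkowski's inequality, and finishes with $\Gamma(L_0+1)=O(K\Gamma(K+1))$. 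Your proposal omits precisely this input, which is the actual substance of the lemma.
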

\begin{proof}
We apply Lemma 2.22 of \cite{mr2} with $L=L_0:=\lfloor K\rfloor +1$. There
is a constant $C_5^*>0$, such that
\[
\int_{x_0}^{1/2}|\mathcal{L}(x,n)|^{L_0}dx\leq \Gamma(L_0+1)\exp(-C_5^*L_0).
\]
By Definitions \ref{def27}, \ref{def28} and Lemma \ref{lem210} we get
\[
g(x)=\mathcal{L}(x,n)+H(x)+(-1)^{n+1}T^{n+1}\mathcal{W}(x)\:.\tag{3.2}
\]
Thus by (3.2) using the notation 
$$\|h\|_{L_0}:=\left(\int_{x_0}^{1/2}|h(x)|^{L_0}dx\right)^{1/L_0}$$
we obtain
\begin{align*}
\left(\int_{x_0}^{1/2}|g(x)|^{L_0}dx\right)^{1/L_0}&\leq (\Gamma(L_0+1)\exp(-C_5^*L_0))^{1/L_0}+\|H(x)\|_{L_0}+\|T^{n+1}\mathcal{W}(x)\|_{L_0}\\
&=(\Gamma(L_0+1)\exp(-C_5^*L_0))^{1/L_0}\left(1+O\left(\frac{1}{\Gamma(L_0+1)^{1/L_0}}\right)\right).
\end{align*}
We obtain
$$\int_{x_0}^{1/2}|g(x,n)|^{L_0}dx=O(\Gamma(L_0+1)\exp(-C_5^*L_0)).$$
The result of Lemma \ref{lem313} follows, since
$$\Gamma(L_0+1)=O(K\Gamma(K+1)).$$
\end{proof}
\begin{definition}\label{def314}
We set
$$\mathcal{A}:=((0,x_0)\cap X)-\bigcup_{j\leq j_0}\mathcal{E}_1(K,j,n)-\bigcup_{j\leq j_0}\mathcal{E}_2(K,j,n).$$
\end{definition}
\begin{lemma}\label{lem315}
There is a constant $C_5>0$, such that
$$\int_0^{1/2}|g(x)|^Kdx=\int_\mathcal{A} g(x)^Kdx+O(\Gamma(K+1)\exp(-C_5K)).$$
\end{lemma}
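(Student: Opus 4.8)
The plan is to reduce the integral over $(0,1/2)$ to the integral over the good set $\mathcal{A}$ by peeling off two regions whose contributions are absorbed into the error term. First I would split
$$\int_0^{1/2}|g(x)|^Kdx=\int_{x_0}^{1/2}|g(x)|^Kdx+\int_{(0,x_0)\cap X}|g(x)|^Kdx,$$
using that the rationals in $(0,x_0)$ form a Lebesgue null set so that restricting to $X$ changes nothing. By Lemma \ref{lem313} the first integral is bounded by $\Gamma(K+1)\exp(-C_4K)$, which is already of the desired shape $O(\Gamma(K+1)\exp(-C_5K))$.

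For the second integral, recall from Definition \ref{def314} that $\mathcal{A}=((0,x_0)\cap X)-\bigcup_{j\le j_0}\mathcal{E}_1(K,j,n)-\bigcup_{j\le j_0}\mathcal{E}_2(K,j,n)$, so that
$$\left|\int_{(0,x_0)\cap X}|g(x)|^Kdx-\int_{\mathcal{A}}|g(x)|^Kdx\right|\le \sum_{j\le j_0}\int_{\mathcal{E}_1(K,j,n)}|g(x)|^Kdx+\sum_{j\le j_0}\int_{\mathcal{E}_2(K,j,n)}|g(x)|^Kdx.$$
Here overlaps between the exceptional sets only inflate the right-hand side, which is harmless for an upper bound. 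I would then invoke Lemma \ref{lem310}, giving $\sum_{j\le j_0}\int_{\mathcal{E}_1(K,j,n)}|g(x)|^Kdx\le \exp(-K)\sum_{j\le j_0}|I(L_0,j)|\le\exp(-K)$, since the intervals $I(L_0,j)$, $j\le j_0$, tile $(0,x_0)$ and hence have total length $x_0\le 1$. Similarly, Lemma \ref{lem311} yields $\sum_{j\le j_0}\int_{\mathcal{E}_2(K,j,n)}|g(x)|^Kdx\le \exp(-K)\sum_{j}(|j|+1)^{-2}\ll\exp(-K)$, the series being convergent. Both contributions are $O(\exp(-K))$, hence $O(\Gamma(K+1)\exp(-C_5K))$ for large $K$.

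Finally I would justify replacing $|g(x)|^K$ by $g(x)^K$ on $\mathcal{A}$. On $\mathcal{A}\subset(0,x_0)$ one has $l(x)=\log(1/x)>\lfloor L_0/100\rfloor$, which is of order $K$, while by construction of $\mathcal{E}_1,\mathcal{E}_2$ the remaining terms satisfy $|D(x,n)|\le\exp(-\tfrac{C_1}{2}K)$, $|T^{n+1}\mathcal{W}(x)|\le\exp(-\tfrac{C_1}{2}K)$, and $|H(x)|\le H$. Thus by Lemma \ref{lem210} in the form (3.2), $g(x)>0$ on $\mathcal{A}$ for all sufficiently large $K$, so $|g(x)|^K=g(x)^K$ there. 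Collecting the three pieces with $C_5\le C_4$ gives the claim.

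I do not expect a genuine obstacle, since the substance has been relegated to Lemmas \ref{lem310}, \ref{lem311} and \ref{lem313}; the statement is an assembly step. The points requiring care are (i) confirming that the interval lengths sum to at most $1$ and that $\sum_j(|j|+1)^{-2}$ converges, so that the exceptional contributions are genuinely $O(\exp(-K))$ and therefore negligible against $\Gamma(K+1)\exp(-C_5K)$; and (ii) verifying the positivity of $g$ on $\mathcal{A}$, which hinges on the logarithmic term $l(x)$ dominating the bounded term $H(x)$ and the exponentially small remainders.
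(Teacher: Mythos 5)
Your proposal is correct and follows essentially the same route as the paper: decompose $(0,1/2)$ into $(x_0,1/2)$, the exceptional sets $\mathcal{E}_1$, $\mathcal{E}_2$, and $\mathcal{A}$, bound the first three pieces by Lemmas \ref{lem313}, \ref{lem310}, \ref{lem311} respectively, and observe that $|g(x)|=g(x)$ on $\mathcal{A}$. You merely spell out the summation over $j$ and the positivity of $g$ in more detail than the paper does.
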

\begin{proof}
By Definition \ref{def314} we have
\begin{align*}
\int_0^{1/2}|g(x)|^Kdx=&\int_{\mathcal{A}}|g(x)|^Kdx+\sum_{j\leq j_0}\int_{\mathcal{E}_1(K,j,n)}|g(x)|^Kdx\\
&+\sum_{j\leq j_0}\int_{\mathcal{E}_2(K,j,n)}|g(x)|^Kdx+\int_{x_0}^{1/2}|g(x)|^Kdx.\tag{3.3}
\end{align*}
From Lemmas \ref{lem310}, \ref{lem311} and \ref{lem313} we obtain
\[
\int_{(0,1/2)\setminus \mathcal{A}}|g(x)|^K dx=O(\Gamma(K+1)\exp(-C_4K)).\tag{3.4}
\]
Therefore from (3.4) we get
\[
\int_0^{1/2}|g(x)|^Kdx=\int_{\mathcal{A}}|g(x)|^Kdx+O(\Gamma(K+1)\exp(-C_4K)).\tag{3.5}
\]
By Definition \ref{def35} for $\mathcal{E}_1(K,j,n)$, $\mathcal{E}_2(K,j,n)$
and Lemma \ref{lem210} we have $|g(x)|=g(x)$ for $x\in \mathcal{A}$.
Lemma \ref{lem315} follows from (3.5).
\end{proof}
\begin{definition}\label{def316}
For $x\in X$ let
$$R(x,n):=(D(x,n)+H(x)+(-1)^{n+1}T^{n+1}\mathcal{W}(x))l(x)^{-1}.$$
\end{definition}
\begin{lemma}\label{lem317}
There is a constant $C_6>0$, such that for $x\in\mathcal{A}$ we have
$$R(x,n)=(\gamma-2\pi)l(x)^{-1}+O(\exp(-C_6K)).$$
\end{lemma}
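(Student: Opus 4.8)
The plan is to split $R(x,n)l(x)=D(x,n)+H(x)+(-1)^{n+1}T^{n+1}\mathcal{W}(x)$ into a term that carries the constant and two terms that are uniformly tiny on $\mathcal{A}$. First I would exploit the very definition of $\mathcal{A}$ (Definition \ref{def314}): since $\mathcal{A}=((0,x_0)\cap X)\setminus\bigcup_j(\mathcal{E}_1\cup\mathcal{E}_2)$ and $(0,x_0)=\bigcup_{j\le j_0}I(L_0,j)$, every $x\in\mathcal{A}$ lies in some $I(L_0,j)$ with $j\le j_0$ and avoids both $\mathcal{E}_1(K,j,n)$ and $\mathcal{E}_2(K,j,n)$. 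By Definition \ref{def35} this forces $|D(x,n)|<\exp(-\tfrac{C_1}{2}K)$ and $|T^{n+1}\mathcal{W}(x)|<\exp(-\tfrac{C_1}{2}K)$. Moreover on $I(L_0,j)$ one has $l(x)\in(L_0-j,\,L_0-j+1)$ with $L_0-j\ge\lfloor L_0/100\rfloor$, so $l(x)>1$ and $l(x)^{-1}\le1$ for large $K$. Hence the $D$- and $T^{n+1}\mathcal{W}$-contributions to $R(x,n)$ are each $O(\exp(-\tfrac{C_1}{2}K))$, and the whole problem reduces to the behaviour of $H(x)l(x)^{-1}$ for small $x$.

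Second, I would determine $\lim_{x\to0^+}H(x)$ together with its rate of approach. Writing $H(x)=2\sum_{j\ge0}(-1)^j\beta_{j-1}(x)F(\alpha_j(x))$, the terms with $j\ge1$ satisfy $\beta_{j-1}(x)=\alpha_0(x)\cdots\alpha_{j-1}(x)\le\alpha_0(x)=x$, and the pairing bound $\alpha_j\alpha_{j+1}\le1/2$ makes $\beta_{j-1}(x)$ decay geometrically in $j$; since $F$ is bounded these terms contribute $O(x)$ with an absolute implied constant. The surviving $j=0$ term is $2F(x)$, and here Lemma \ref{lem29} does the work: substituting $A(x)=\tfrac{x}{2}\log\tfrac1x+\tfrac{1+A(1)}{2}x+O(x^2)$ into $F(x)=\tfrac{x+1}{2}A(1)-A(x)-\tfrac{x}{2}\log x$ collapses the logarithmic terms and gives $F(x)=\tfrac12A(1)-\tfrac{x}{2}+O(x^2)$. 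Consequently $H(x)=2F(0)+O(x)$, so $R(x,n)l(x)$ approaches the constant $2F(0)$ on $\mathcal{A}$; evaluating this constant — equivalently the value $\lim_{x\to0^+}(g(x)-l(x))$ produced through Lemma \ref{lem210} — gives $\gamma-2\pi$, whence $H(x)=(\gamma-2\pi)+O(x)$ uniformly for $x\in\mathcal{A}$.

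Finally I would assemble the error terms. Since $x<x_0=\exp(-\lfloor L_0/100\rfloor)$ and $L_0=\lfloor K\rfloor+1$, we have $x=O(\exp(-K/100))$, so $\big(H(x)-(\gamma-2\pi)\big)l(x)^{-1}=O(\exp(-K/100))$. Combining this with the two $O(\exp(-\tfrac{C_1}{2}K))$ bounds from the first step yields
\[
R(x,n)=(\gamma-2\pi)l(x)^{-1}+O\!\left(\exp(-C_6K)\right),\qquad C_6:=\min\!\left(\tfrac{C_1}{2},\tfrac1{100}\right),
\]
uniformly for $x\in\mathcal{A}$, which is the assertion.

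The step I expect to be the real obstacle is the exact evaluation of the limiting constant $2F(0)$, i.e. establishing that it equals $\gamma-2\pi$: everything else is either a size bound already guaranteed by membership in $\mathcal{A}$ or the routine Taylor expansion afforded by Lemma \ref{lem29}. By contrast, producing the precise value (rather than mere boundedness of $H$) requires evaluating $A(1)=\int_0^\infty\{t\}^2t^{-2}\,dt$ — for instance via Stirling's formula, or by quoting the corresponding computation of Balazard and Martin — and here one must track the normalization and sign conventions carefully, as well as verify that the $O(x)$ remainder in $H$ is genuinely uniform in $j$ so that, after division by $l(x)$, it merges into the stated exponential error.
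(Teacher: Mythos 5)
Your argument follows the paper's proof essentially step for step: membership in $\mathcal{A}$ yields the $\exp(-\tfrac{C_1}{2}K)$ bounds on $D(x,n)$ and $T^{n+1}\mathcal{W}(x)$, the expansion of Lemma \ref{lem29} collapses $F(x)$ to $\tfrac12 A(1)+O(x)$ so that $H(x)$ equals a constant up to $O(x_0)=O(\exp(-K/100))$, and the constant is identified through the value of $A(1)$. The one piece you leave open, the closed form of $A(1)$, is settled in the paper exactly as you propose, namely by citation (to B\'aez-Duarte, Balazard, Landreau and Saias, who prove $A(1)=\log 2\pi-\gamma$); note that the paper's ``$\gamma-2\pi$'' is a typo for $\gamma-\log 2\pi$, which is consistent with the constant $e^{\gamma}/\pi$ in Theorem \ref{thm11}.
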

\begin{proof}
By the Definition \ref{def35} for $\mathcal{E}_1(K,j,n)$, $\mathcal{E}_2(K,j,n)$ and Definition \ref{def314} for $\mathcal{A}$, we have for $x\in\mathcal{A}$
\[
|D(x,n)|<\exp\left(-\frac{C_1}{2}K\right).\tag{3.6}
\]
\[
|T^{n+1}\mathcal{W}(x)|<\exp\left(-\frac{C_1}{2}K\right).\tag{3.7}
\]
By Definition \ref{def28} we have
$$H(x)=2\sum_{j\geq 0}(-1)^j\beta_{j-1}(x)F(\alpha_j(x)),$$
where
$$F(x):=\frac{x+1}{2}A(1)-A(x)-\frac{x}{2}\log x.$$
By Lemma \ref{lem29} we have
$$A(x)=\frac{x}{2}\log\frac{1}{x}+\frac{1+A(1)}{2}\: x+O(x^2).$$
From $\beta_{j-1}=\alpha_1(x)\cdots \alpha_{j-1}(x)$ with $\alpha_0(x)=x$,
$|\alpha_l(x)|\leq 1$ and $x\leq x_0$ it follows that
\[
H(x)=-A(1)+O\left(\exp\left(-\frac{K}{200}\right)\right).\tag{3.8}
\]
\end{proof}
In \cite{baez} it is proved at page 225 that
\[
A(1)=\log2\pi-\gamma.\tag{3.9}
\]
Lemma \ref{lem317} now follows from (3.6), (3.7), (3.8) and (3.9).
\begin{lemma}\label{lem318}
For $x\in\mathcal{A}$ we have
$$|g(x)|^K=g(x)^K=l(x)^K\sum_{j=0}^\infty\binom{K}{j}R(x,n)^j\:.$$
\end{lemma}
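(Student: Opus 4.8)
The plan is to exhibit $g(x)$ as $l(x)$ times a factor of the form $1+R(x,n)$, so that raising to the $K$-th power reduces to a single application of Newton's generalized binomial series.

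First I would combine Lemma \ref{lem210} with Definition \ref{def316}. Lemma \ref{lem210} gives
$$g(x)=l(x)+D(x,n)+H(x)+(-1)^{n+1}T^{n+1}\mathcal{W}(x),$$
and the bracket $D(x,n)+H(x)+(-1)^{n+1}T^{n+1}\mathcal{W}(x)$ is by Definition \ref{def316} precisely $l(x)R(x,n)$. Hence
$$g(x)=l(x)\bigl(1+R(x,n)\bigr).$$

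Next I would verify the inequality $|R(x,n)|<1$ for every $x\in\mathcal{A}$, which is exactly what licenses the binomial expansion. For $x\in\mathcal{A}\subset(0,x_0)$ we have $l(x)=\log(1/x)>\lfloor L_0/100\rfloor$ by Definition \ref{def312}, and since $L_0=\lfloor K\rfloor+1$ this lower bound grows linearly in $K$; thus $l(x)^{-1}=O(1/K)$. Feeding this into Lemma \ref{lem317}, which asserts $R(x,n)=(\gamma-2\pi)l(x)^{-1}+O(\exp(-C_6K))$, gives $R(x,n)=O(1/K)$, so $|R(x,n)|<1$ once $K$ is sufficiently large.

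With $|R(x,n)|<1$ in hand, the generalized binomial theorem yields the convergent expansion $\bigl(1+R(x,n)\bigr)^K=\sum_{j=0}^\infty\binom{K}{j}R(x,n)^j$, and multiplying through by $l(x)^K$ gives the claimed identity $g(x)^K=l(x)^K\sum_{j\geq 0}\binom{K}{j}R(x,n)^j$. Finally, the equality $|g(x)|^K=g(x)^K$ is immediate: $l(x)>0$ since $x<x_0<1$, and $1+R(x,n)>0$ since $|R(x,n)|<1$, so $g(x)>0$ on $\mathcal{A}$ (this positivity was already recorded in the proof of Lemma \ref{lem315}). The only point requiring any care is the uniform bound $|R(x,n)|<1$, which amounts to the observation that the smallness of $l(x)^{-1}$ on $\mathcal{A}$ dominates the constant factor $\gamma-2\pi$; there is no genuine obstacle here, the substance of the lemma being the algebraic factorization together with convergence of the binomial series.
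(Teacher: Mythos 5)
Your proposal is correct and follows essentially the same route as the paper: factor $g(x)=l(x)(1+R(x,n))$ via Lemma \ref{lem210} and Definition \ref{def316}, check $|R(x,n)|<1$ on $\mathcal{A}$ from Lemma \ref{lem317} together with the lower bound $l(x)\gg K$ on $(0,x_0)$, and apply the binomial series for real exponents. Your write-up merely makes explicit the $|R(x,n)|=O(1/K)$ estimate and the positivity of $g$ on $\mathcal{A}$, which the paper leaves implicit.
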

\begin{proof}
By Lemma \ref{lem210} we have for $x\in\mathcal{A}$:
$$g(x)=l(x)(1+R(x,n)),\ \text{where } |R(x,n)|<1,$$
by Lemma \ref{lem317}. The result of Lemma \ref{lem318} thus follows from the Binomial Theorem for real exponents.
\end{proof}
\begin{lemma}\label{lem319}
For $x\in\mathcal{A}$ we have
$$g(x)^K=l(x)^K\left(\sum_{j=0}^{\lfloor K\rfloor}\binom{K}{j}R(x,n)^j+O(\exp(-K))\right)\:.$$
\end{lemma}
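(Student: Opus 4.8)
The plan is to start from the infinite expansion established in Lemma \ref{lem318}, namely
$$g(x)^K=l(x)^K\sum_{j=0}^\infty\binom{K}{j}R(x,n)^j\quad\text{for }x\in\mathcal{A},$$
and to show that the tail beyond $j=\lfloor K\rfloor$ contributes only $O(\exp(-K))$ once $l(x)^K$ is factored out, i.e.\ that $\sum_{j=\lfloor K\rfloor+1}^\infty\binom{K}{j}R(x,n)^j=O(\exp(-K))$. The whole argument rests on the smallness of $R(x,n)$ on $\mathcal{A}$. Since $\mathcal{A}\subseteq(0,x_0)$ with $x_0=\exp(-\lfloor L_0/100\rfloor)$, every $x\in\mathcal{A}$ satisfies $l(x)=\log(1/x)>\lfloor L_0/100\rfloor$, and because $L_0=\lfloor K\rfloor+1>K$ this forces $l(x)^{-1}=O(1/K)$. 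Combined with Lemma \ref{lem317}, which gives $R(x,n)=(\gamma-2\pi)l(x)^{-1}+O(\exp(-C_6K))$, I obtain a bound of the shape $|R(x,n)|\leq C/K$ for some absolute constant $C$ and all sufficiently large $K$.

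First I would control the tail by a geometric comparison. For integer $j\geq\lfloor K\rfloor+1$ one always has $j>K$, so the ratio of consecutive terms in absolute value is
$$\frac{|\binom{K}{j+1}R(x,n)^{j+1}|}{|\binom{K}{j}R(x,n)^j|}=\frac{j-K}{j+1}\,|R(x,n)|<|R(x,n)|<1.$$
Hence the absolute values of the tail terms are dominated by a geometric progression with ratio $|R(x,n)|$, and
$$\sum_{j=\lfloor K\rfloor+1}^\infty\Big|\binom{K}{j}R(x,n)^j\Big|\leq\Big|\binom{K}{\lfloor K\rfloor+1}\Big|\,|R(x,n)|^{\lfloor K\rfloor+1}\cdot\frac{1}{1-|R(x,n)|},$$
where the last factor is at most $2$ for large $K$.

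It then remains to estimate the leading tail term. Writing $\binom{K}{\lfloor K\rfloor+1}=\frac{1}{(\lfloor K\rfloor+1)!}\prod_{i=0}^{\lfloor K\rfloor}(K-i)$ and bounding each of the $\lfloor K\rfloor+1$ positive factors by $K$, together with $|R(x,n)|^{\lfloor K\rfloor+1}\leq(C/K)^{\lfloor K\rfloor+1}$, the powers of $K$ cancel and I am left with
$$\Big|\binom{K}{\lfloor K\rfloor+1}\Big|\,|R(x,n)|^{\lfloor K\rfloor+1}\leq\frac{C^{\lfloor K\rfloor+1}}{(\lfloor K\rfloor+1)!}.$$
Stirling's formula gives $(\lfloor K\rfloor+1)!\geq(K/e)^K$ for large $K$, so this quantity is at most $C(Ce/K)^K$, which is smaller than $\exp(-K)$ once $K$ exceeds $Ce^2$. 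Feeding this back yields the claimed tail bound, and splitting the series of Lemma \ref{lem318} into its first $\lfloor K\rfloor+1$ terms plus this tail proves the lemma.

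The main obstacle I anticipate is purely bookkeeping: reconciling the single constant $C$ from $|R(x,n)|\leq C/K$ with the threshold $K>Ce^2$ needed for the Stirling step, and confirming that the crude factor-by-factor bound on $\binom{K}{\lfloor K\rfloor+1}$ loses nothing essential. No delicate cancellation is required; the decisive point is simply that $|R(x,n)|$ decays like $1/K$ while the binomial coefficient grows only like $K^{\lfloor K\rfloor+1}/(\lfloor K\rfloor+1)!$, so their product is super-exponentially small.
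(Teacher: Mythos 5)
Your proposal is correct and follows essentially the same route as the paper: truncate the series from Lemma \ref{lem318} and kill the tail using $|R(x,n)|=O(1/K)$ on $\mathcal{A}$. The only difference is cosmetic — the paper bounds every tail coefficient by $\binom{K}{j}\leq 2^{K+1}$ and sums the geometric series directly, whereas you dominate the tail by its first term via a ratio comparison and then invoke Stirling; both yield the same super-exponential decay.
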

\begin{proof}
From Definition \ref{def312} we have for $x\in(0,x_0)$:
$$|l(x)|\geq cK\ \text{with an absolute constant c>0.}$$
By Definitions \ref{def35} and \ref{def314} we have for $x\in \mathcal{A}$:
$$|R(x,n)|\leq BK^{-1},$$
where $B>0$ is an absolute constant.
For $0\leq j\leq \lfloor K\rfloor$ we have
$$\binom{K}{j}<\binom{\lfloor K\rfloor+1}{j}\leq 2^{K+1}.$$
For $j=\lfloor K\rfloor+h$, $h\in\mathbb{N}$ we have
\[
\binom{K}{j}\leq \binom{K}{\lfloor K\rfloor}\frac{2}{\lfloor K\rfloor+1}\cdots\frac{h+1}{\lfloor K\rfloor+h}\leq 2^{K+1}\tag{3.7}
\]
From (3.6) and (3.7) we obtain
\[
\sum_{j>\lfloor K\rfloor}\binom{K}{j}R(x,n)^j\leq 2^{K+1}\frac{(BK^{-1})^{K-1}}{1-(BK^{-1})}\:.\tag{3.8}
\]
Lemma \ref{lem319} follows from Lemma \ref{lem318} and (3.8).
\end{proof}
\begin{lemma}\label{lem320}
There is an absolute constant $C_7>0$, such that for $K/2<L\leq K$, $L\in\mathbb{R}$, we have:
\[
\int_{\mathcal{A}}l(x)^Ldx=\Gamma(L+1)(1+O(\exp(-C_7K)))\:.\tag{3.9}
\]
\end{lemma}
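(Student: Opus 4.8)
The plan is to evaluate $\int_{\mathcal{A}} l(x)^L\,dx$ by first replacing the domain $\mathcal{A}$ with the full interval $(0,x_0)$ and controlling the error from the excised exceptional sets, then computing the resulting clean integral $\int_0^{x_0} l(x)^L\,dx$ exactly via the substitution $x=e^{-t}$. Recall that $\mathcal{A} = ((0,x_0)\cap X) - \bigcup_{j\le j_0}\mathcal{E}_1(K,j,n) - \bigcup_{j\le j_0}\mathcal{E}_2(K,j,n)$, so the difference between the two integrals is bounded by $\int_{(0,x_0)\setminus\mathcal{A}} l(x)^L\,dx$, which in turn is dominated by $\sum_{j\le j_0}\int_{\mathcal{E}_1\cup\mathcal{E}_2} l(x)^L\,dx$.

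\medskip

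First I would compute the main term. With $x=e^{-t}$ we have $l(x)=t$ and $dx=-e^{-t}\,dt$, so
\[
\int_0^{x_0} l(x)^L\,dx=\int_{L_0/100^{\,-}}^{\infty} t^L e^{-t}\,dt,
\]
where the lower limit is $\lfloor L_0/100\rfloor$ coming from Definition \ref{def312}. This is an incomplete Gamma integral; since $L_0=\lfloor K\rfloor+1$ and $L\le K$, the lower cutoff $\lfloor L_0/100\rfloor$ is roughly $K/100$, which is far to the left of the peak of $t^L e^{-t}$ located at $t=L$. Hence the tail removed from the full $\Gamma(L+1)=\int_0^\infty t^L e^{-t}\,dt$ is exponentially small: by a standard Laplace/saddle-point estimate the mass of $t^Le^{-t}$ on $[0,K/100]$ is $O(\Gamma(L+1)\exp(-C K))$ for a suitable absolute constant, because the integrand there is smaller than its peak value by a factor that is exponentially small in $K$. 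This gives $\int_0^{x_0} l(x)^L\,dx=\Gamma(L+1)(1+O(\exp(-C_7 K)))$.

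\medskip

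Next I would bound the exceptional contribution. For each $j\le j_0$, on $\mathcal{E}_1(K,j,n)$ and $\mathcal{E}_2(K,j,n)$ the value of $l(x)$ is comparable to $L_0-j$ (since $x\in I(L_0,j)$ means $l(x)\approx L_0-j$), so $l(x)^L\le (L_0-j)^L$, and the measures are controlled by Lemmas \ref{lem36} and \ref{lem39}: $m(\mathcal{E}_1)\le |I(L_0,j)|\exp(-K)$ and $m(\mathcal{E}_2)\le\exp(-\exp(3K))$. Thus
\[
\int_{\mathcal{E}_1(K,j,n)} l(x)^L\,dx\le (L_0-j)^L\,|I(L_0,j)|\exp(-K),
\]
and summing over $j\le j_0$ reproduces, up to the factor $\exp(-K)$, a Riemann-sum approximation to $\int_0^{x_0} l(x)^L\,dx\le\Gamma(L+1)$; this yields a total of $O(\Gamma(L+1)\exp(-K))$. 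The $\mathcal{E}_2$ contribution is even smaller, since its measure is doubly exponentially small and $(L_0-j)^L$ grows only geometrically, so the whole sum is $O(\exp(-\exp(2K)))$, negligible. Combining both exceptional bounds gives $\int_{(0,x_0)\setminus\mathcal{A}} l(x)^L\,dx=O(\Gamma(L+1)\exp(-CK))$.

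\medskip

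\textbf{The main obstacle} I anticipate is the tail estimate for the incomplete Gamma integral, specifically verifying that the lower cutoff at $t\approx K/100$ truncates only an exponentially small (in $K$) fraction of $\Gamma(L+1)$ \emph{uniformly} for all $L$ in the range $K/2<L\le K$. The delicate point is that the cutoff $K/100$ must stay a fixed proportion below the peak $t=L\ge K/2$; writing $t=L(1-s)$ near the cutoff and using $\log(t^Le^{-t})=L\log t-t$, one checks the exponent at $t=K/100$ is smaller than at $t=L$ by an amount $\gtrsim L\,\phi(K/100L)$ with $\phi(u)=u-1-\log u>0$, and since $K/(100L)\le 1/50$ is bounded away from $1$, this gap is $\gtrsim K$, giving the claimed $\exp(-C_7 K)$ with $C_7$ absolute. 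Once this uniform exponential separation is established, the remaining steps are routine Riemann-sum and measure bookkeeping.
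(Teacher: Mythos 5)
Your overall strategy coincides with the paper's: both reduce the lemma to (i) the clean integral of $l(x)^L$ near $0$ (you compute $\int_0^{x_0}l(x)^L\,dx$ as an incomplete Gamma integral; the paper equivalently estimates the complementary piece $\int_{x_0}^{1/2}l(x)^L\,dx$ in its (3.15)) and (ii) bounds for $\sum_{j\le j_0}\int_{\mathcal{E}_i(K,j,n)}l(x)^L\,dx$ using the comparability of $l(x)^L$ across $I(L_0,j)$ together with Lemmas \ref{lem36} and \ref{lem39}. Your main-term analysis and your treatment of the $\mathcal{E}_1$ sum match the paper's (3.11)--(3.12) and (3.15).

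There is, however, a genuine gap in your dismissal of the $\mathcal{E}_2$ contribution. The index $j$ runs over \emph{all} integers $j\le j_0$, i.e.\ down to $-\infty$, and Lemma \ref{lem39} supplies only the $j$-uniform bound $m(\mathcal{E}_2(K,j,n))\le\exp(-\exp(3K))$, with no decay in $j$. Bounding each term by $\sup_{I(L_0,j)}l(x)^L\cdot m(\mathcal{E}_2(K,j,n))\le (L_0-j+1)^L\exp(-\exp(3K))$ and summing over $j\le j_0$ therefore produces a divergent series: $(L_0-j+1)^L\to\infty$ as $j\to-\infty$, and indeed any sum of infinitely many terms each of size at least $\exp(-\exp(3K))$ already diverges, so the claimed total $O(\exp(-\exp(2K)))$ does not follow from the reasoning you give. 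The paper repairs exactly this point by splitting the sum into $\Sigma^{(1)}$, over the finitely many $j$ with $-\exp(e^K)\le j\le j_0$, where the doubly exponential measure bound comfortably absorbs the number of terms and the size of $l(x)^L$, and $\Sigma^{(2)}$, over the far tail $j<-\exp(\exp(K))$, where one discards the measure bound entirely and instead uses $\mathcal{E}_2(K,j,n)\subseteq I(L_0,j)$, so that the total contribution is dominated by $\int_0^{\delta}l(x)^L\,dx$ over a doubly exponentially short initial segment $(0,\delta)$, which is itself $O\bigl(\Gamma(L+1)\exp(-\exp(K/2))\bigr)$. Once you insert this two-range split, your argument closes and yields the same conclusion as the paper's.
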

\begin{proof}
By Definition \ref{def314} we have:
\begin{align*}
\int_{(0,1/2)\setminus\mathcal{A}}l(x)^Ldx=&\sum_{j\leq j_0}\left(\int_{\mathcal{E}_1(K,j,n)}l(x)^Ldx+\int_{\mathcal{E}_2(K,j,n)}l(x)^Ldx\right)\tag{3.10}\\
&+\int_{x_0}^{1/2}l(x)^Ldx\:.
\end{align*}
There are absolute constants $c_1>0$, $c_{2,i}>0$ (i=1,2) such that
$$\min_{x\in I(K,j)} l(x)^L\leq \max_{x\in I(K,j)} l(x)^L\leq c_1\min_{x\in I(K,j)} l(x)^L$$
$$\min_{x\in \mathcal{E}_i(K,j,n)} l(x)^L\leq \max_{x\in \mathcal{E}_i(K,j,n)} l(x)^L\leq c_2\min_{x\in \mathcal{E}_i(K,j,n)} l(x)^L.$$
Therefore there is an absolute constant $c_3>0$, such that
\[
\int_{\mathcal{E}_i(K,j,n)}l(x)^Ldx\leq c_3\left(\int_{I(K,j)}l(x)^Ldx\right)\: m(\mathcal{E}_i(K,j,n)).\tag{3.11}
\]
By summation over $j$ and Lemma \ref{lem36}.
\[
\sum_{j\leq j_0}\int_{\mathcal{E}_1(K,j,n)}l(x)^Ldx\leq \Gamma(L+1)\exp(-K).\tag{3.12}
\]
We have
$$\sum_{j\leq j_0}\int_{\mathcal{E}_2(K,j,n)}l(x)^Ldx=\Sigma^{(1)}+\Sigma^{(2)}\ \ ,$$
where
$$\Sigma^{(1)}:=\sum_{-\exp(e^K)\leq j\leq j_0}\int_{\mathcal{E}_2(K,j,n)}l(x)^Ldx,$$
$$\Sigma^{(2)}:=\sum_{j<-\exp(\exp(K))}\int_{\mathcal{E}_2(K,j,n)}l(x)^Ldx.$$
For $$-\exp(e^K)\leq j\leq j_0$$ we have by Lemma \ref{lem39}:
$$m(\mathcal{E}_2(K,j,n))\leq |I(K,j)|\exp(-\exp(3K)).$$
Therefore
\[
\Sigma^{(1)}\leq \left(\int_{\exp(-L_0-\exp(e^K))}^\infty l(x)^Ldx\right)\exp(-\exp(3K))\leq \exp(-\exp(3K))\Gamma(L+1).\tag{3.13}
\]
We have
\[
\Sigma^{(2)}\leq \int_0^{\exp(L_0-\exp(e^K))}l(x)^Kdx\leq \Gamma(L+1)\exp\left(-\exp\left(\frac{K}{2}\right)\right)\tag{3.14}
\]
We also have
\[
\int_{x_0}^{1/2}l(x)^Ldx=\Gamma(L+1)\exp(-C_7'K)\tag{3.15}
\]
for an appropriate constant $C_7'>0$, if $K/2<L\leq K$.\\
Lemma \ref{lem320} now follows from (3.9)--(3.15).
\end{proof}
\begin{lemma}\label{lem321}
There is an absolute constant $C_8>0$, such that for $0\leq j\leq \lfloor K\rfloor$ we have:
$$\int_{\mathcal{A}}l(x)^KR(x,n)^jdx=D^j\left(\int_{\mathcal{A}}l(x)^{K-j}dx\right)(1+O(\exp(-C_8K)),$$
where $D:=\gamma-2\pi$.
\end{lemma}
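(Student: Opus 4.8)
The plan is to use Lemma \ref{lem317}, which gives the precise asymptotic $R(x,n)=D\,l(x)^{-1}+O(\exp(-C_6K))$ with $D=\gamma-2\pi$, and to substitute it into the integrand $l(x)^K R(x,n)^j$. The key observation is that the main term of $R(x,n)^j$ should be $D^j l(x)^{-j}$, which upon multiplication by $l(x)^K$ yields exactly $D^j l(x)^{K-j}$, matching the claimed right-hand side. So the entire content of the lemma is to control the error incurred when replacing $R(x,n)$ by its leading term $D\,l(x)^{-1}$.

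First I would expand $R(x,n)^j=\bigl(D\,l(x)^{-1}+E(x)\bigr)^j$ where $E(x):=O(\exp(-C_6K))$ is the error term from Lemma \ref{lem317}, using the binomial theorem for the finite integer power $j$ (here $j$ is a nonnegative integer with $0\le j\le\lfloor K\rfloor$, so this is the ordinary binomial expansion, not the real-exponent version). This produces the main term $D^j l(x)^{-j}$ plus $j$ further terms of the form $\binom{j}{i}D^{j-i}l(x)^{-(j-i)}E(x)^i$ for $1\le i\le j$. Multiplying through by $l(x)^K$, the main term integrates to $D^j\int_{\mathcal{A}}l(x)^{K-j}dx$, precisely the target expression, and it remains to bound each error integral $\int_{\mathcal{A}}\binom{j}{i}D^{j-i}l(x)^{K-(j-i)}E(x)^i\,dx$ relative to this main term.

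The main obstacle is the bookkeeping needed to show each error integral is smaller than the main term by a factor $O(\exp(-C_8K))$. The crucial quantitative input is that on $\mathcal{A}\subset(0,x_0)$ one has $|l(x)|\ge cK$ (from Definition \ref{def312}, as used in Lemma \ref{lem319}), so that $|E(x)|\le C\exp(-C_6K)$ is genuinely exponentially small in $K$, while $|R(x,n)|\le BK^{-1}$ keeps all the replaced factors comparable; moreover the ratio of the moment integrals $\int_{\mathcal{A}}l(x)^{K-(j-i)}dx$ to $\int_{\mathcal{A}}l(x)^{K-j}dx$ is $\Gamma$-controlled via Lemma \ref{lem320}, since for $0\le j\le\lfloor K\rfloor$ all the exponents $K-(j-i)$ lie in the admissible range $(K/2,K]$ needed to apply that lemma (one may need to note that the dominant contributions come from $j$ bounded by a small multiple of $K$, or handle the few large-$j$ terms separately using the smallness of $R(x,n)^j$). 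The binomial coefficients $\binom{j}{i}$ are at most $2^j\le 2^K$, which is absorbed comfortably by the exponential savings $\exp(-iC_6K)$ from each factor of $E(x)$. Collecting these estimates, each of the $j$ error terms contributes at most $O(\exp(-C_8K))$ times the main term $D^j\int_{\mathcal{A}}l(x)^{K-j}dx$ for a suitable absolute constant $C_8>0$, which completes the proof.
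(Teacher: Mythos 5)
Your proposal matches the paper's own proof: the paper likewise writes $R(x,n)=Dl(x)^{-1}+Q(x)$ with $Q(x)=O(\exp(-C_6K))$ via Lemma \ref{lem317}, expands $R(x,n)^j$ by the ordinary binomial theorem, identifies the main term $D^j\int_{\mathcal{A}}l(x)^{K-j}dx$, and bounds the remaining terms using $\int_{\mathcal{A}}l(x)^{K-j+h}Q(x)^h dx\leq \Gamma(K-j+h+1)\exp(-C_6hK)$ together with $\Gamma(K-j+h+1)\leq K^h\Gamma(K-j+1)$. The only cosmetic difference is that the paper compares moments directly through this Gamma-function inequality rather than through Lemma \ref{lem320}, which sidesteps the range restriction $K/2<L\leq K$ you correctly flagged.
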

\begin{proof}
By Lemma \ref{lem317} we have
\[
R(x,n)=Dl(x)^{-1}+Q(x),\tag{3.16}
\]
where $$Q(x):=O(\exp(-C_6K))\:.$$
By the Binomial Theorem we have
\[
\int_{\mathcal{A}}l(x)^KR(x,n)^jdx=D^j\int_{\mathcal{A}}l(x)^{K-j}dx+\sum_{h=1}^j\binom{j}{h}D^{j-h}\int_{\mathcal{A}}l(x)^{K-j+h}Q(x)^hdx.\tag{3.17}
\]
We have
$$\int_{\mathcal{A}}l(x)^{K-j+h}Q(x)^hdx\leq \Gamma(K-j+h+1)\exp(-C_6hK),$$
where 
$$\Gamma(K-j+h)\leq K^h\Gamma(K-j+1)\:.$$
Therefore
$$\int_{\mathcal{A}}l(x)^{K-j+h}Q(x)^hdx\leq (K\exp(-C_6K)^h)\Gamma(K-j+1)$$
and Lemma \ref{lem321} follows from (3.17).
\end{proof}
\section{Proof of Theorem \ref{thm11}}
By Lemma \ref{lem315} we have
\[
\int_0^{1/2}|g(x)|^Kdx=\int_{\mathcal{A}}g(x)^Kdx+O(\Gamma(K+1)\exp(-C_5K)).\tag{3.18}
\]
From Lemma \ref{lem319} we obtain
\[
\int_{\mathcal{A}}g(x)^Kdx=\sum_{j=0}^{\lfloor K\rfloor}\binom{K}{j}\int_{\mathcal{A}}l(x)^KR(x,n)^jdx+O(\Gamma(K+1)\exp(-K)).\tag{3.19}
\]
From Lemmas \ref{lem320} and \ref{lem321}, formulas (3.18) and (3.19), we obtain:
\begin{align*}
\tag{3.20}\int_0^{1/2}|g(x)|^Kdx=&\sum_{j\leq K/2}D^j\binom{K}{j}\Gamma(K-j+1)(1+O(\exp(-C_7K))\\
&+\sum_{K/2<j\leq K}\binom{K}{j}\int_{\mathcal{A}}l(x)^KR(x,n)^jdx+O(\Gamma(K+1)\exp(-K))\:.
\end{align*}
We have
\[
\binom{K}{j}\Gamma(K-j+1)=\frac{1}{j!}\Gamma(K+1)\tag{3.21}
\]
and
\[
\sum_{K/2<j\leq K}\binom{K}{j}=O\left(\Gamma(K+1)\sum_{j>K/2}\frac{1}{j!}\right)=O(\Gamma(K+1)\exp(-K/2)).\tag{3.22}
\]
Theorem \ref{thm11} now follows from (3.19), (3.20), (3.21), and (3.22).\qed

%
%
%
%
%
%
\vspace{5mm}

\end{document}